\def\idots{{\reflectbox{$\ddots$}}}
\def\N{{\mathbb{N}}}
\def\R{{\mathbb{R}}}
\def\dy{{\textrm{d}y}}
\newtheorem{lemma}{Lemma}
\newtheorem{theorem}{Theorem}
\newtheorem{definition}{Definition}
\begin{document}

\title{Study of boundary conditions in the Iterative Filtering method for the decomposition of nonstationary signals}

\author{Antonio Cicone\thanks{Istituto Nazionale di Alta Matematica, Citt\`a Universitaria, P.le Aldo Moro 5, 00185, Roma, Italy, and DISIM, Universit\`a degli Studi dell'Aquila, via Vetoio 1, 67100, L'Aquila, Italy, and Gran Sasso Science Institute, viale Francesco Crispi 7, 67100, L'Aquila, Italy ({\tt antonio.cicone@univaq.it})}
\and Pietro Dell'Acqua\thanks{Libera Universit\`a di Bolzano, Piazza Domenicani 3, 39100 Bolzano, Italy ({\tt pietro.dellacqua@gmail.com})}}

\maketitle
\date{}

\begin{abstract}
Non-stationary and non-linear signals are ubiquitous in real life.
Their decomposition and analysis is an important research topic of signal processing.
Recently a new technique, called Iterative Filtering, has been developed with the goal of decomposing such signals into simple oscillatory components.
Several papers have been devoted to the investigation of this technique from a mathematical point of view.
All these works start with the assumption that each compactly supported signal is extended periodically outside the boundaries.
In this work, we tackle the problem of studying the influence of different boundary conditions on the decompositions produced by the Iterative Filtering method.
In particular, the choice of boundary conditions gives rise to different types of structured matrices.
Thus, we describe their spectral properties and then convergence properties of Iterative Filtering algorithm (in which such matrices are involved).
Numerical results provide an interesting overview on important aspects
(such as accuracy and error propagation) of the techniques proposed
and show the way of further promising developments.
\end{abstract}

\section{Introduction}

Given a real life non-stationary signal $\mathbf{s}(x)$, $x\in\R$,
we may be interested in decomposing it into simple components in order to identify features and quasi periodicities hidden in it. We can think, for instance, to a economic index, like the GDP of a nation, or a geophysical signal, like the sea level during a tsunami, as well as to an engineering measure, like the vibrations of a structure or machinery. Standard techniques, like Fourier or wavelet transform, cannot help, in general, to decompose meaningfully non-stationary signals. Whereas, following the idea proposed by Huang et al. in \cite{huang1998empirical}, we can iteratively decompose such signals into a finite sequence of simple components, defined Intrinsic Mode Functions (IMFs), which fulfill two properties:
\textit{i}) the number of extrema and the number of zero crossings must either equal or differ at most by one;
\textit{ii}) considering upper and lower envelopes connecting respectively all the local maxima and minima of the function, their mean has to be zero at any point.

The method originally proposed by Huang et al. in \cite{huang1998empirical}, called Empirical Mode Decomposition (EMD), proved to be unstable to small perturbations. For this reason an alternative algorithm, called Ensemble Empirical Mode Decomposition (EEMD), was proposed in \cite{wu2009ensemble} which is based on the idea of applying EMD to an ensemble of signals produced perturbing hundreds of times the given one with noise. The decomposition is then derived as the average decomposition of the ensemble.
In \cite{lin2009iterative} the authors proposed an alternative technique to the EMD, called Iterative Filtering (IF), which has the very same structure of EMD, but it is stable and convergent both in the continuous setting \cite{cicone2014adaptive,huang2009convergence} and in the discrete one \cite{cicone2017numerical,cicone2017multidimensional}. We point out that IF has been also generalized producing the so called Adaptive Local Iterative Filtering (ALIF) algorithm whose convergence and stability are under investigation \cite{cicone2014adaptive,cicone2017spectral}. For further details on why standard techniques may fail in decomposing a non-stationary signal and the advantages of using EEMD or IF we refer the interested reader to \cite{cicone2017dummies}.

In this work we consider the case of compactly supported and discrete signals. For simplicity we assume that each signal
$\mathbf{s}=\left[\mathbf{s}(x_j)\right]_{j=0}^{n-1}$ (with $n\in\N$)
is supported on $[0,1]$ and it is sampled at $n$ points $x_j= \frac{j}{n-1}$, with $j=0,\ldots,n-1$. Without loosing generality we can further assume that $\| \mathbf{s} \|_2 = 1$.

Any signal decomposition method which deals with a compactly supported signal requires assumptions on how the signal extends outside the boundaries, the so called Boundary Conditions (BCs). This aspect has a key role in many applications, for instance in image restoration \cite{pietro2016}, and usually the goal is to employ BCs able to guarantee good accuracy of the approximated solution computed by some numerical algorithm \cite{pietro2017}.
Regarding IF, in \cite{cicone2017numerical} the authors addressed its convergence when it is assumed that the signals extend periodically outside the boundaries. The questions which are still open are: does IF converge also for other kinds of BCs? Given a signal extended artificially outside the boundaries in a certain way, how do the errors introduced outside the boundaries effect the decomposition in the iterations? Given a compactly supported signal what is the best choice in terms of BCs?

The paper is organized as follows.
In Section \ref{sec:IF} we review the IF algorithm applied to discrete and compactly supported signals.
Section \ref{sec:BC} is devoted to a summary of BCs and their properties.
In Section \ref{sec:IFconv} we study the IF convergence when different BCs are chosen for the signal.
In Section \ref{sec:Error} we present an extended version of IF method,
which is useful for addressing the question of how the errors propagate from outside the boundaries to the inside.
This work ends with some numerical examples showing the impact of BCs on decomposition quality and error propagation in Section \ref{sec:Examples},
and concluding remarks in Section \ref{sec:Conclusions}.

\section{Discrete Iterative Filtering}
\label{sec:IF}

In this section we review the IF method focusing on the case of discrete and compactly supported signals $\mathbf{s}$.
We start from the definition of filter

\begin{definition}\label{def:filter}
A non-negative vector $\mathbf{w}=(0,\ldots,0,w_{-l},\ldots,w_{-1},w_{0},w_1,\ldots,w_{l},0,\ldots,0) \in \R^n$
such that $w_j>0$ for $j=-l,\ldots,l$ and $\underset{j=-l}{\overset{l}{\sum}} w_j = 1$
is called a \textbf{filter} of length $l$, with $0 < l \leq \lfloor \frac{n-1}{2} \rfloor$.
If a filter is such that $w_{-j}=w_j$ for $j=1,\ldots,l$,
then $\mathbf{w}$ is called \textbf{symmetric}.
If a filter is such that for $0 \leq i < j $, $w_{i} \geq w_{j}$
and for $j < i \leq 0 $, $w_{i} \geq w_{j}$,
then $\mathbf{w}$ is called \textbf{decreasing}.
\end{definition}

In this work we consider only symmetric filters.
When we have a symmetric filter associated with some step $m$, we employ the following notation
\begin{equation}
\mathbf{w}_m=(0,\ldots,0,w_{l_m}^m,\ldots,w_{1}^m,w_{0}^m,w_1^m,\ldots,w_{l_m}^m,0,\ldots,0),
\end{equation}
with $w_{0}^m + 2 \underset{j=1}{\overset{l_m}{\sum}} w_j^m = 1$.

If we assume that some filter shape $h:[-1,1] \rightarrow \R$ (symmetric with respect to $y$-axis) has been selected a priori,
like one of the Fokker-Planck filters described in \cite{cicone2014adaptive},
then the elements $w_j^m$ can be computed, for $j=0,1,\ldots,l_m$, by the linear scaling formula
\begin{equation}
w_j^m = h\left(\dfrac{j}{l_m}\right) \dfrac{1}{l_m},
\end{equation}
where $l_m$ is the length that characterizes the filter.
Assuming $\mathbf{s}_1^m=\mathbf{s}$, where the two indices will become clear in the next paragraph, the main step of the IF method,
for $i=0,\ldots,n-1$, is
\begin{eqnarray*}
\mathbf{s}_{k+1}^m(x_i) &=& \mathbf{s}_{k}^m(x_i)-\int_{x_i-\frac{l_m}{n-1}}^{x_i+\frac{l_m}{n-1}} \!\!\!\!\!\!\! \mathbf{s}_k^m(y)
h\left(\dfrac{(x_i-y)(n-1)}{l_m}\right)\dfrac{n-1}{l_m} \dy \\
&\approx& \mathbf{s}_{k}^m(x_i)-\!\!\!\!\! \sum_{x_j=x_i-\frac{l_m}{n-1}}^{x_i+\frac{l_m}{n-1}}\!\!\!\!\! \mathbf{s}_k^m(x_j)
h\left(\dfrac{(x_i-x_j)(n-1)}{l_m}\right)\dfrac{1}{l_m} \\
&=& \mathbf{s}_{k}^m(x_i)-\!\!\! \sum_{j=i-l_m}^{i+l_m} \!\!\! \mathbf{s}_k^m(x_j)
h\left(\dfrac{i-j}{l_m}\right)\dfrac{1}{l_m} \\
&=& \mathbf{s}_{k}^m(x_i)-\!\!\! \sum_{j=i-l_m}^{i+l_m} \!\!\! \mathbf{s}_k^m(x_j) w_{\vert i-j \vert}^m
\end{eqnarray*}

Algorithm \ref{algo:IF_discrete} provides the pseudocode of the Discrete Iterative Filtering (DIF) Algorithm.
We observe that the first while loop is called Outer Loop, whereas the second one Inner Loop.
In the notation $\mathbf{s}_k^m$, $m$ denotes the step relative to the Outer Loop, while $k$ denotes the step relative to the Inner Loop.
\begin{algorithm}
\caption{\textbf{Discrete Iterative Filtering} IMFs = DIF$(s, h)$}\label{algo:IF_discrete}
\begin{algorithmic}
\STATE $m=1$
\STATE $\mathbf{s}_1^m = \mathbf{s}$
\WHILE{the number of extrema of $\mathbf{s}_1^m$ $\geq 2$}
	  \STATE $k=1$
      \STATE compute the filter length $l_m$ for the signal $\mathbf{s}_k^m$
      \STATE compute $\mathbf{w}_m$ (having $h$ and $l_m$)
      \WHILE{the stopping criterion is not satisfied}
                  \STATE $(\mathbf{s}_{k}^m)^{\mathcal{BC}}(x_i)=\mathbf{s}_{k}^m(x_i)$, $i=0,\ldots,n-1$
                  \STATE apply BCs for computing $(\mathbf{s}_{k}^m)^{\mathcal{BC}}(x_i)$, $i=-l_m,\ldots,-1$ and $i=n,\ldots,n-1+l_m$
                  		\STATE  $\mathbf{s}_{k+1}^m(x_i) = \mathbf{s}_{k}^m(x_i) - \underset{j=i-l_m}{\overset{i+l_m}{\sum}} (\mathbf{s}_{k}^m)^{\mathcal{BC}}(x_j) w_{\vert i-j \vert}^m$, $\quad i= 0,\ldots, n-1$
                  \STATE  $k = k+1$
      \ENDWHILE
      \STATE $\mathbf{f}_m = \mathbf{s}_{k}^m$
      \STATE $\mathbf{s}_1^{m+1} = \mathbf{s}_1^{m}-\mathbf{f}_{m}$
      \STATE $m = m+1$
\ENDWHILE
\STATE  $\mathbf{f}_m = \mathbf{s}_1^m$
\STATE IMFs = $\{ \mathbf{f}_1,\ldots, \mathbf{f}_m \}$
\end{algorithmic}
\end{algorithm}
The idea is to suitably choose the filter length $l_m$ in order to capture the desired frequencies in the IMF $\mathbf{f}_m$,
and this is done by subtracting iteratively from the signal its moving average computed as convolution of the signal itself with the selected filter. In matrix form we have
\begin{equation}\label{eq:MatrixForm}
    \mathbf{s}_{k+1}^m=(I-W_m)\mathbf{s}_k^m,
\end{equation}
where $W_m$ is a structured matrix constructed from the filter $\mathbf{w}_m$
and the BCs imposed.
In particular $W_m$ can be written as the sum of two matrices
\begin{equation}
W_{m}^{\mathcal{BC}}=T_{m}+K_{m}^{\mathcal{BC}},
\end{equation}
where the first one is a Toeplitz matrix,
while the second one is a correction matrix which depends on BCs (for details, see Section \ref{sec:BC}).

From (\ref{eq:MatrixForm}), it follows immediately that
\begin{equation}
    \mathbf{s}_{k+1}^m = (I-W_m)^k \mathbf{s}_1^m,
\end{equation}
so ideally the first IMF is given by
\begin{equation}\label{eq:First_IMF_fixed_length}
\mathbf{f}_1 = \lim_{k\rightarrow\infty} (I-W_1)^{k} \mathbf{s}.
\end{equation}
However, in the implemented algorithm we do not let $k$ to go to infinity, instead we use a stopping criterion. We can define, for instance, the following quantity
\begin{equation}\label{eq:SD}
\Delta_k^m:=\frac{\|\mathbf{s}_{k+1}^m-\mathbf{s}_{k}^m\|_2}{\|\mathbf{s}_{k}^m\|_2},
\end{equation}
so we can either stop the process when the value $\Delta_k^m$ reaches a certain threshold or we can introduce a limit on the maximal number of iterations for all the Inner Loops. It is also possible to adopt different stopping criteria for different Inner Loops.

\section{Boundary conditions and structured matrices}
\label{sec:BC}

Boundary conditions deal with the problem of extending the signal outside the field of view in which the detection is made.
So let $\mathbf{s}$ be the signal inside the boundaries and let $p$ be the parameter relative to the space outside the boundaries,
we have that,
for $j= 1, \ldots, p$,
Zero BCs are defined as
\begin{equation}
\mathbf{s}(x_{-j}) = 0, \hspace{0.5cm} \mathbf{s}(x_{n-1+j}) = 0,
\end{equation}
Periodic BCs are defined as
\begin{equation}
\mathbf{s}(x_{-j}) = \mathbf{s}(x_{n-j}), \hspace{0.5cm} \mathbf{s}(x_{n-1+j}) = \mathbf{s}(x_{j-1}),
\end{equation}
Reflective BCs are defined as
\begin{equation}
\mathbf{s}(x_{-j}) = \mathbf{s}(x_{j-1}), \hspace{0.5cm} \mathbf{s}(x_{n-1+j}) = \mathbf{s}_{n-j},
\end{equation}
Anti-Reflective BCs are defined as
\begin{equation}
\label{AR1d}
\mathbf{s}(x_{-j})=2\mathbf{s}(x_{0})-\mathbf{s}(x_{j}), \hspace{0.5cm} \mathbf{s}(x_{n-1+j})=2\mathbf{s}(x_{n-1})-\mathbf{s}(x_{n-1-j}).
\end{equation}

According to the BCs imposed, we have a different kind of structured matrix as $W^{\mathcal{BC}}$,
whose elements are defined from values of the filter $\mathbf{w}$ of length $l$.
As said, here we take into account symmetric filters,
since this choice allows to have useful theoretical properties.
In particular,
symmetry is not necessary to get the algebra of Circulant matrices,
associated with Periodic BCs,
while it is necessary to get the algebra of Reflective matrices and
the algebra of Anti-Reflective matrices.
The symmetry property also allows in all these three cases to have a fast transform
that can be employed for computing matrix-vectors products in an efficient way.
In particular, we have Discrete Fourier Transform (DFT) for Circulant matrices,
Discrete Cosine Transform of type III (DCT-III) for Reflective matrices
and Anti-Reflective Transform (ART)
-- strongly linked to Discrete Sine Transform of type I (DST-I) --
for Anti-Reflective matrices.

Unfortunately, this does not hold for Toeplitz matrices, associated with Zero BCs,
i.e. $W^{\mathcal{Z}}=T$, where
\begin{equation}\label{eq:Toeplitz}
T=\left(
\begin{array}{cccccccccc}
w_{0} & w_{1} & w_{2} & \ldots  & w_{l} &  &  &  &  &  \\
w_{1} & w_{0} & w_{1} & w_{2} & \ddots  & w_{l} &  &  &  &  \\
w_{2} & w_{1} & w_{0} & w_{1} & w_{2} & \ddots  & w_{l} &  &  &  \\
\vdots  & w_{2} & w_{1} & w_{0} & w_{1} & w_{2} & \ddots  & \ddots  &  &  \\
w_{l} & \ddots  & w_{2} & w_{1} & \ddots  & \ddots  & \ddots  & \ddots  & w_{l} &  \\
& w_{l} & \ddots  & w_{2} & \ddots  & \ddots  & w_{1} & w_{2} & \ddots  & w_{l} \\
&  & w_{l} & \ddots  & \ddots  & w_{1} & w_{0} & w_{1} & w_{2} & \vdots  \\
&  &  & \ddots  & \ddots  & w_{2} & w_{1} & w_{0} & w_{1} & w_{2} \\
&  &  &  & w_{l} & \ddots  & w_{2} & w_{1} & w_{0} & w_{1} \\
&  &  &  &  & w_{l} & \ldots  & w_{2} & w_{1} & w_{0}
\end{array}%
\right) _{n\times n}
.
\end{equation}

In case of Periodic BCs, we have Circulant matrices that have this form
\begin{equation}
W^{\mathcal{P}}=\left(
\begin{array}{cccccccccc}
w_{0} & w_{1} & \ldots  & w_{l} &  &  &  & w_{l} & \ldots  & w_{1} \\
w_{1} & w_{0} & w_{1} & \ddots  & w_{l} &  &  &  & \ddots  & \vdots  \\
\vdots  & w_{1} & w_{0} & w_{1} & \ddots  & w_{l} &  &  &  & w_{l}\\
w_{l} & \ddots  & w_{1} & w_{0} & w_{1} & \ddots  & \ddots  &  &  &  \\
& w_{l} & \ddots  & w_{1} & w_{0} & \ddots  & \ddots  & w_{l}&  &  \\
&  & w_{l} & \ddots  & \ddots  & \ddots  & w_{1} & \ddots  & w_{l} &  \\
&  &  & \ddots  & \ddots  & w_{1} & w_{0} & w_{1} & \ddots  & w_{l} \\
w_{l} &  &  &  & w_{l} & \ddots  & w_{1} & w_{0} & w_{1} & \vdots  \\
\vdots  & \ddots  &  &  &  & w_{l} & \ddots  & w_{1} & w_{0} & w_{1} \\
w_{1} & \ldots  & w_{l} &  &  &  & w_{l} & \ldots  & w_{1} & w_{0}%
\end{array}%
\right) _{n\times n}
.
\end{equation}
Denoted by $\mathrm{i}$ the imaginary unit,
$W^{\mathcal{P}}$ can be diagonalized by $Q^{\mathcal{P}}$, defined as
\begin{equation}
[Q^{\mathcal{P}}]_{i,j} = \sqrt{n} F^{-1} =\dfrac{1}{\sqrt{n}} e^{ \frac{2 (i-1) (j-1) \pi \mathrm{i} }{n} }, \hspace{1cm} i,j=1,\ldots,n,
\end{equation}
where $F$ is the $n$-dimensional DFT.
Eigenvalues of $W^{\mathcal{P}}$ can be computed by this formula, for $i=1,\ldots,n$,
\begin{equation}
\label{equ:eigP}
\lambda _{i}^{\mathcal{P}}=w_{0}+2\underset{j=1}{\overset{l}{\sum }}w_{j}\cos\left( \frac{2j(i-1)\pi }{n}\right).
\end{equation}

In the Reflective case, we have $W^{\mathcal{R}}=T+H^{\mathcal{R}}$,
where $H^{\mathcal{R}}$ is a Hankel matrix
\begin{equation}
H^{\mathcal{R}}=\left(
\begin{array}{cccccccccc}
w_{1} & w_{2} & w_{3} & \ldots  & w_{l} &  &  &  &  &  \\
w_{2} & w_{3} & \idots  & \idots  &  &  &  &  &  &  \\
w_{3} & \idots  & \idots  &  &  &  &  &  &  &  \\
\vdots  & \idots  &  &  &  &  &  &  &  &  \\
w_{l} &  &  &  &  &  &  &  &  &  \\
&  &  &  &  &  &  &  &  & w_{l} \\
&  &  &  &  &  &  &  & \idots  & \vdots  \\
&  &  &  &  &  &  & \idots  & \idots  & w_{3} \\
&  &  &  &  &  & \idots  & \idots  & w_{3} & w_{2} \\
&  &  &  &  & w_{l} & \ldots  & w_{3} & w_{2} & w_{1}%
\end{array}%
\right) _{n\times n}
.
\end{equation}
$W^{\mathcal{R}}$ can be diagonalized by $Q^{\mathcal{R}}$,
that is the $n$-dimensional DCT-III,
having entries
\begin{equation}
[Q^{\mathcal{R}}]_{i,j}=\sqrt{\dfrac{2-\delta _{i1}}{n}}\cos \left( \dfrac{%
(i-1)(2j-1)\pi }{2n}\right), \hspace{1cm} i,j=1,\ldots,n,
\end{equation}
where $\delta_{ij}$ denotes the Kronecker delta.
Eigenvalues of $W^{\mathcal{R}}$ can be obtained by the following formula
\begin{equation}
\lambda _{i}^{\mathcal{R}}=\dfrac{[Q^{\mathcal{R}}(W^{\mathcal{R}} \mathbf{e}_{1})]_{i}}{[Q\mathbf{e}_{1}]_{i}},
\end{equation}
where $\mathbf{e}_{1}=(1,0,\ldots ,0)^{T} \in \R^n$.
Therefore,
using the variable $t=\frac{(i-1)\pi }{2n}$,
and exploiting
\begin{equation*}
\cos (2jt-t)=\cos (2jt)\cos (t)+\sin (2jt)\sin(t)
\end{equation*}
and
\begin{equation*}
\cos (2jt+t)=\cos (2jt)\cos (t)-\sin (2jt)\sin(t),
\end{equation*}
we get that in the Reflective case eigenvalues can be computed by this formula, for $i=1,\ldots,n$,
\begin{eqnarray}
\label{equ:eigR}
\lambda _{i}^{\mathcal{R}} &=&\underset{j=1}{\overset{n}{\sum }}[W^{\mathcal{R}}]_{1,j}\dfrac{\cos \left( (2j-1)\dfrac{(i-1)\pi }{2n}\right) }{\cos \left( \dfrac{(i-1)\pi }{2n}\right) } \nonumber \\
&=&w_{0}+\underset{j=1}{\overset{l}{\sum }}w_{j}\dfrac{\cos((2j-1)t)+\cos ((2j+1)t)}{\cos \left( t\right) } \nonumber \\
&=&w_{0}+2\underset{j=1}{\overset{l}{\sum }}w_{j}\cos (2jt) \nonumber \\
&=&w_{0}+2\underset{j=1}{\overset{l}{\sum }}w_{j}\cos \left( \frac{j(i-1)\pi }{n}\right)
\end{eqnarray}

In the Anti-Reflective case, the structure of the matrix is more involved, namely
\begin{equation}
W^{\mathcal{AR}}=\left(
\begin{array}{ccccccc}
z_{1}+w_{0} & 0 & \ldots & \ldots & \ldots & 0 & 0 \\
z_{2}+w_{1} &  &  &  &  &  & \vdots \\
\vdots &  &  &  &  &  & \vdots \\
z_{l}+w_{l-1} &  &  &  &  &  & 0 \\
w_{l} &  &  & \hat{W}^{\mathcal{AR}} &  &  & w_{l} \\
0 &  &  &  &  &  & z_{l}+w_{l-1}\\
\vdots &  &  &  &  &  & \vdots \\
\vdots &  &  &  &  &  & z_{2}+w_{1} \\
0 & 0 & \ldots & \ldots & \ldots & 0 & z_{1}+w_{0}
\end{array}%
\right) _{n\times n}
,
\end{equation}
where $z_{j}= 2 \underset{k=j}{\overset{l}{\sum }}w_{k}$ and $\hat{W}^{\mathcal{AR}}= P W^{\mathcal{AR}}P^{T}$, with
\begin{equation}
P=\left(
\begin{array}{ccccccc}
0 & 1 &  &  &  &  & 0 \\
0 &  & 1 &  &  &  & 0 \\
\vdots &  &  & \ddots &  &  & \vdots \\
0 &  &  &  & 1 &  & 0 \\
0 &  &  &  &  & 1 & 0%
\end{array}%
\right) _{(n-2)\times n}
.
\end{equation}

Moreover, $\hat{W}^{\mathcal{AR}}=\hat{T}-\hat{H}^{\mathcal{AR}}$, where $\hat{T}$ is a Toeplitz matrix
\begin{equation}
\hat{T}=\left(
\begin{array}{cccccccc}
w_{0} & w_{1} & \ldots  & w_{l} &  &  &  &  \\
w_{1} & w_{0} & w_{1} & \ddots  & w_{l} &  &  &  \\
\vdots  & w_{1} & w_{0} & w_{1} & \ddots  & \ddots  &  &  \\
w_{l} & \ddots  & w_{1} & w_{0} & \ddots  & \ddots  & w_{l} &  \\
& w_{l} & \ddots  & \ddots  & \ddots  & w_{1} & \ddots  & w_{l} \\
&  & \ddots  & \ddots  & w_{1} & w_{0} & w_{1} & \vdots  \\
&  &  & w_{l} & \ddots  & w_{1} & w_{0} & w_{1} \\
&  &  &  & w_{l} & \ldots  & w_{1} & w_{0}%
\end{array}%
\right) _{(n-2)\times (n-2)}
,
\end{equation}
while $\hat{H}^{\mathcal{AR}}$ is a Hankel matrix
\begin{equation}
\hat{H}^{\mathcal{AR}}=\left(
\begin{array}{cccccccc}
w_{2} & w_{3} & \ldots  & w_{l} &  &  &  &  \\
w_{3} & \idots  & \idots  &  &  &  &  &  \\
\vdots  & \idots  &  &  &  &  &  &  \\
w_{l} &  &  &  &  &  &  &  \\
&  &  &  &  &  &  & w_{l} \\
&  &  &  &  &  & \idots  & \vdots  \\
&  &  &  &  & \idots  & \idots  & w_{3} \\
&  &  &  & w_{l} & \ldots  & w_{3} & w_{2}
\end{array}%
\right) _{(n-2)\times (n-2)}
.
\end{equation}
$\hat{W}^{\mathcal{AR}}$ can be diagonalized by $\hat{Q}^{\mathcal{AR}}$,
that is the ($n-2$)-dimensional DST-I,
having entries
\begin{equation}
[\hat{Q}^{\mathcal{AR}}]_{i,j}=\sqrt{\dfrac{2}{n-1}}\sin \left( \dfrac{ij\pi }{n-1}\right), \hspace{1cm} i,j=1,\ldots,n-2.
\end{equation}
Eigenvalues of $\hat{W}^{\mathcal{AR}}$ can be obtained by the following formula
\begin{equation}
\hat{\lambda}_{i}^{\mathcal{AR}}=\dfrac{[\hat{Q}^{\mathcal{AR}}(\hat{W}^{\mathcal{AR}}\mathbf{\hat{e}}_{1})]_{i}}{[\hat{Q}^{\mathcal{AR}}\mathbf{\hat{e}}_{1}]_{i}}
\end{equation}
where $\mathbf{\hat{e}}_{1}=(1,0,\ldots ,0)^{T} \in \R^{n-2}$.
Therefore,
using the variable $t=\frac{i\pi }{n-1}$,
and exploiting
\begin{equation*}
\sin (2t)=2\sin (t)\cos (t)
\end{equation*}
and
\begin{equation*}
\cos (2t)=1-2\sin ^{2}(t)
\end{equation*}
and
\begin{eqnarray*}
\sin ((j+2)t)-\sin jt) &=& \sin (jt)\cos (2t)+\sin (2t)\cos (jt)-\sin(jt) \\
&=& \sin (jt)(1-2\sin ^{2}(t))+2\sin (t)\cos (t)\cos (jt)-\sin (jt) \\
&=& -2\sin(jt)\sin ^{2}(t)+2\sin (t)\cos (t)\cos (jt)
\end{eqnarray*}
and
\begin{equation*}
2 \sin(jt) \sin(t) = \cos(jt-t) - \cos(jt+t)
\end{equation*}
and\begin{equation*}
2 \cos(t) \cos(jt) = \cos(jt-t) - \cos(jt+t)
\end{equation*}
we get that eigenvalues of $\hat{W}^{\mathcal{AR}}$ can be computed by this formula, for $i=1,\ldots,n-2$,
\begin{eqnarray}
\label{equ:eigAR}
\hat{\lambda}_{i}^{\mathcal{AR}} &=&\underset{j=1}{\overset{n-2}{\sum }}[\hat{W}^{\mathcal{AR}}]_{1,j}\dfrac{\sin
\left( \dfrac{ji\pi }{n-1}\right) }{\sin \left( \dfrac{i\pi }{n-1}\right) } \nonumber \\
&=&w_{0}+w_{1}\dfrac{\sin (2t)}{\sin \left( t\right) }+\underset{j=1}{\overset{l-1}{\sum }}w_{j+1}\dfrac{\sin ((j+2)t)}{\sin \left( t\right) }-w_{j+1}\dfrac{\sin (jt)}{\sin \left( t\right) } \nonumber \\
&=&w_{0}+2w_{1}\cos (t)+\underset{j=1}{\overset{l-1}{\sum }}w_{j+1}\left(2\cos (t)\cos (jt)-2\sin (jt)\sin (t)\right)  \nonumber \\
&=&w_{0}+2w_{1}\cos (t)+2\underset{j=1}{\overset{l-1}{\sum }}w_{j+1}\cos((j+1)t) \nonumber \\
&=&w_{0}+2w_{1}\cos (t)+2\underset{j=2}{\overset{l}{\sum }}w_{j}\cos(jt) \nonumber \\
&=&w_{0}+2\underset{j=1}{\overset{l}{\sum }}w_{j}\cos \left(\dfrac{ji\pi }{n-1} \right)
\end{eqnarray}
By Lemma 3.1 in \cite{serra2003antirefl}, we know that the eigenvalues of $W^{\mathcal{AR}}$ are given by 1 with multiplicity
two and by the eigenvalues $\lbrace \hat{\lambda}_{i}^{\mathcal{AR}} \rbrace_{i=1,\ldots,n-2}$ of $\hat{W}^{\mathcal{AR}}$.

Looking at the structure of the matrices presented,
it can be easily verified that
the eigenvector $u_1^{\mathcal{P}}$ associated with $\lambda_1^{\mathcal{P}}=1$ is $(1,\ldots,1)^T$,
the eigenvector $u_1^{\mathcal{R}}$ associated with $\lambda_1^{\mathcal{R}}=1$ is $(1,\ldots,1)^T$,
and the eigenvectors associated with $\lambda_1^{\mathcal{AR}}=\lambda_2^{\mathcal{AR}}=1$ are
$u_1^{\mathcal{AR}}=(0,1,2,\ldots,n-2,n-1)^T$ and $u_2^{\mathcal{AR}}=(n-1,n-2,\ldots,2,1,0)^T$.

We notice that $Q^{\mathcal{P}}$, $Q^{\mathcal{R}}$  and $\hat{Q}^{\mathcal{AR}}$ are all unitary matrices.
However, the last one is linked to $\hat{W}^{\mathcal{AR}}$,
so we also need to know how to diagonalize the original matrix $W^{\mathcal{AR}}$.
In other words, basing on DST-I, we need to introduce ART.
Unfortunately this transform is not unitary, since the matrices in Anti-Reflective algebra are in general not normal.
Recalling theoretical results reported in \cite{arico2011,pietro2016},
we have that $W^{\mathcal{AR}}$ can be diagonalized by $Q^{\mathcal{AR}}$,
that is the $n$-dimensional ART, defined as follows
\begin{equation}
\label{eq:ART}
Q^{\mathcal{AR}}=\left(
\begin{array}{ccccccc}
(n-1) \eta^{-1} & 0 & \ldots & \ldots & \ldots & 0 & 0 \\
(n-2) \eta^{-1}  &  &  &  &  &  & \eta^{-1}  \\
(n-3) \eta^{-1}  &  &  &  &  &  & 2 \eta^{-1}  \\
\vdots  &  &  &  &  &  & \vdots \\
\vdots  &  &  & \hat{Q}^{\mathcal{AR}} &  &  & \vdots \\
\vdots  &  &  &  &  &  & \vdots\\
2 \eta^{-1}  &  &  &  &  &  & (n-3) \eta^{-1}  \\
\eta^{-1}  &  &  &  &  &  & (n-2) \eta^{-1}  \\
0 & 0 & \ldots & \ldots & \ldots & 0 & (n-1) \eta^{-1}
\end{array}%
\right) _{n\times n}
,
\end{equation}
where $\eta = \sqrt{\sum_{j=0}^{n-1} j^2}$ is introduced just to normalize the first and the last column,
which come from eigenvectors  $u_1^{\mathcal{AR}}$ and $u_2^{\mathcal{AR}}$.

Summarizing, in this Section we have introduced BCs,
analyzing the properties of matrices associated with their choice,
in particular providing descriptions of matrix structures,
formulas for computation of eigenvalues
and diagonalization results (associated with discrete transforms).
All this will be very useful in Section \ref{sec:IFconv},
in which convergence properties of DIF will be investigated.

\section{Spectral properties and convergence results}
\label{sec:IFconv}

At this stage, we consider some BCs (Periodic or Reflective or Anti-Reflective).
By exploiting results presented in Section \ref{sec:BC},
we are able to prove spectral properties of $W^{\mathcal{BC}}$ and convergence results for DIF algorithm.

\begin{lemma}
\label{lemma:1}
Let $W^{\mathcal{BC}} \in \R^{n \times n}$ be the structured matrix constructed from a symmetric decreasing filter $\mathbf{w}$ of length
$0 < l \leq \lfloor \frac{n-1}{2} \rfloor$, imposing some BCs (Periodic or Reflective or Anti-Reflective).
Then $\sigma (W^{\mathcal{BC}})\subseteq [-1,1]$.
\end{lemma}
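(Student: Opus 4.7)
The plan is to derive the spectral inclusion directly from the closed-form eigenvalue expressions (\ref{equ:eigP}), (\ref{equ:eigR}) and (\ref{equ:eigAR}) established in Section \ref{sec:BC}, invoking only two elementary facts about the filter weights recorded in Definition \ref{def:filter}: each $w_j$ is non-negative, and they satisfy the normalization $w_0 + 2\sum_{j=1}^{l} w_j = 1$.

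All three boundary-condition cases can be treated uniformly. In each case every non-trivial eigenvalue has the common shape $\lambda = w_0 + 2\sum_{j=1}^{l} w_j \cos\theta_j^{(i)}$ for suitable angles: the arguments of the cosines are $2j(i-1)\pi/n$ in the Periodic case, $j(i-1)\pi/n$ in the Reflective case, and $ji\pi/(n-1)$ in the Anti-Reflective case. In the Anti-Reflective setting, Lemma 3.1 of \cite{serra2003antirefl} additionally provides the eigenvalue $1$ with algebraic multiplicity two, arising from the eigenvectors $u_1^{\mathcal{AR}}$ and $u_2^{\mathcal{AR}}$; this trivially belongs to $[-1,1]$ and need not be discussed further.

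From the unified form both bounds are immediate. For the upper bound, $\cos\theta_j^{(i)} \leq 1$ together with $w_j \geq 0$ gives $\lambda \leq w_0 + 2\sum_{j=1}^{l} w_j = 1$. For the lower bound, $\cos\theta_j^{(i)} \geq -1$ and $w_j \geq 0$ yield $\lambda \geq w_0 - 2\sum_{j=1}^{l} w_j = 2w_0 - 1 \geq -1$, since $w_0 > 0$ by Definition \ref{def:filter}. Combining these across the three cases gives $\sigma(W^{\mathcal{BC}}) \subseteq [-1,1]$.

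There is no substantial obstacle: the argument is essentially monotonicity of $\cos$ combined with the partition-of-unity character of the filter. It is worth remarking that the hypothesis that $\mathbf{w}$ is a \emph{decreasing} filter is never invoked; only non-negativity and the normalization enter the proof. Presumably the decreasing assumption is recorded here for consistency with the finer convergence results to follow in Section \ref{sec:IFconv}, where separation of the eigenvalue $1$ from the remainder of the spectrum (and hence a refined inclusion in $(-1,1)$ away from $1$) will likely require that additional structure.
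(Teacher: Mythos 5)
Your proof is correct, but it takes a genuinely different route from the paper's. The paper never touches the eigenvalue formulas: it asserts that $W^{\mathcal{BC}}$ is symmetric (hence has real spectrum) and bounds the spectral radius by the induced infinity norm, $\rho(W^{\mathcal{BC}}) \le \|W^{\mathcal{BC}}\|_\infty = \max_i \sum_j |[W^{\mathcal{BC}}]_{i,j}| = w_0 + 2\sum_{k=1}^{l} w_k = 1$. That argument is shorter, but it quietly relies on two things your version does not: (i) the symmetry claim is literally true for the Periodic and Reflective matrices but not for $W^{\mathcal{AR}}$ (the paper itself notes later that Anti-Reflective matrices are in general not normal; reality of the AR spectrum really comes from the symmetric inner block $\hat{W}^{\mathcal{AR}}$ together with the double eigenvalue $1$); and (ii) identifying the absolute row sums with $w_0 + 2\sum_k w_k$ requires the entries of $\hat{T}-\hat{H}^{\mathcal{AR}}$ to be non-negative, which is exactly where the \emph{decreasing} hypothesis enters. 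Your route via (\ref{equ:eigP}), (\ref{equ:eigR}) and (\ref{equ:eigAR}) sidesteps both issues --- the eigenvalues are manifestly real and the bounds follow from $|\cos|\le 1$ and the normalization alone --- so your closing remark that decreasingness is never invoked is accurate for your argument, though not for the paper's. You also obtain the slightly sharper lower bound $\lambda \ge 2w_0-1 > -1$ for free, and your computation feeds directly into the refinement $\sigma(W^{\mathcal{BC}}) \subseteq [0,1]$ of Theorem \ref{teo:1}. The price is that you lean on the completeness of the spectral formulas (in the Anti-Reflective case, on Lemma 3.1 of \cite{serra2003antirefl}), whereas the norm bound needs only the matrix entries.
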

\begin{proof}
By definition $W^{\mathcal{BC}}$ is a symmetric matrix, so it has a real spectrum.
By considering the following estimate relative to the spectral radius
\begin{equation*}
\rho (W^{\mathcal{BC}})\leq \left\Vert W^{\mathcal{BC}}\right\Vert _{\infty}=
\underset{i}{\max }\underset{j=1}{\overset{n}{\sum }}\left\vert[W^{\mathcal{BC}}]_{i,j}\right\vert=
w_{0}+2\underset{k=1}{\overset{l}{\sum }}w_{k}=1,
\end{equation*}
we can conclude that all eigenvalues lie in the interval $[-1,1]$.
\end{proof}

\begin{theorem}
\label{teo:1}
Let $\mathbf{s} \in \R^n$ the signal that has to be decomposed.
Let $\mathbf{v}$ be a symmetric decreasing filter of length $0 < l' \leq \lfloor \frac{n-1}{4} \rfloor$
and $\mathbf{w} = \mathbf{v} \ast \mathbf{v}$ be another symmetric decreasing filter of length $0 < l \leq \lfloor \frac{n-1}{2} \rfloor$
defined by making the convolution of $\mathbf{v}$ with itself.
Then for the matrix $W^{\mathcal{BC}} \in \R^{n \times n}$ constructed from $\mathbf{w}$ and some BCs
(Periodic or Reflective or Anti-Reflective),
it holds that $\sigma (W^{\mathcal{BC}})\subseteq [0,1]$.
Moreover,
$\lambda_1^{\mathcal{P}}=1$ and $\lbrace \lambda_{i}^{\mathcal{P}} \rbrace_{i=2,\ldots,n} \subseteq [0,1)$,
$\lambda_1^{\mathcal{R}}=1$ and $\lbrace \lambda_{i}^{\mathcal{R}} \rbrace_{i=2,\ldots,n} \subseteq [0,1)$,
$\lambda_1^{\mathcal{AR}}=\lambda_2^{\mathcal{AR}}=1$ and $\lbrace \lambda_{i}^{\mathcal{AR}} \rbrace_{i=3,\ldots,n} \subseteq [0,1)$.
\end{theorem}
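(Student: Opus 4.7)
The plan is to exploit $\mathbf{w}=\mathbf{v}\ast\mathbf{v}$ so that every eigenvalue of $W^{\mathcal{BC}}$ factors as the square of the corresponding eigenvalue of the matrix $V^{\mathcal{BC}}$ built the same way from $\mathbf{v}$. Non-negativity will then be immediate, and the strict inequality $<1$ for the ``non-trivial'' indices will follow from a triangle-inequality argument, using crucially that every entry of $\mathbf{v}$ is strictly positive and that $\mathbf{v}$ sums to $1$. The hypothesis $l'\le\lfloor(n-1)/4\rfloor$ is exactly what guarantees $l=2l'\le\lfloor(n-1)/2\rfloor$, so that $\mathbf{w}$ qualifies as a filter in the sense of Definition~\ref{def:filter} and no wrap-around or aliasing occurs in the Circulant setting.

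Using the symmetry $u_{-j}=u_j$, each of (\ref{equ:eigP}), (\ref{equ:eigR}), (\ref{equ:eigAR}) can be rewritten in the unified form
\[
\lambda_i^{\mathcal{BC}}(\mathbf{u})=\sum_{j=-L}^{L} u_j\cos\!\bigl(j\,\theta_i^{\mathcal{BC}}\bigr),
\]
where $L$ is the half-length of $\mathbf{u}$ and the angles are $\theta_i^{\mathcal{P}}=2(i-1)\pi/n$, $\theta_i^{\mathcal{R}}=(i-1)\pi/n$, $\theta_i^{\mathcal{AR}}=i\pi/(n-1)$. Squaring $\mu_i^{\mathcal{BC}}(\mathbf{v})$ and applying $2\cos(j\theta)\cos(k\theta)=\cos((j-k)\theta)+\cos((j+k)\theta)$, the resulting double sum reindexes (via $m=j\pm k$, using the symmetry of $\mathbf{v}$ to identify autocorrelation with self-convolution) to
\[
\bigl(\mu_i^{\mathcal{BC}}(\mathbf{v})\bigr)^{2}=\sum_{m=-2l'}^{2l'}(\mathbf{v}\ast\mathbf{v})_{m}\cos(m\theta_i^{\mathcal{BC}})=\lambda_i^{\mathcal{BC}}(\mathbf{w}).
\]
Combined with Lemma~\ref{lemma:1}, this yields $\sigma(W^{\mathcal{BC}})\subseteq[0,1]$.

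It remains to detect which eigenvalues equal $1$ exactly and to separate the others strictly from $1$. From $\mu_i^{\mathcal{BC}}(\mathbf{v})=v_0+2\sum_{j=1}^{l'}v_j\cos(j\theta_i^{\mathcal{BC}})$ with $v_0,v_j>0$ and $v_0+2\sum v_j=1$, two bounds follow: (i) $\mu_i\le 1$ with equality iff $\cos(j\theta_i^{\mathcal{BC}})=1$ for every $j=1,\dots,l'$, hence (taking $j=1$ and using $v_1>0$) iff $\theta_i^{\mathcal{BC}}\in 2\pi\mathbb{Z}$; and (ii) $\mu_i\ge 2v_0-1>-1$. Translating the exclusion $\theta_i^{\mathcal{BC}}\in 2\pi\mathbb{Z}$: for Periodic and Reflective BCs only $i=1$ qualifies; for Anti-Reflective BCs $\theta_i^{\mathcal{AR}}=i\pi/(n-1)\in(0,\pi)$ for every $i=1,\dots,n-2$, so the condition never holds. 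Squaring yields $\lambda_i\in[0,1)$ off the exceptional indices. The unit eigenvalues $\lambda_1^{\mathcal{P}}=\lambda_1^{\mathcal{R}}=1$ are realized by the constant eigenvector identified at the end of Section~\ref{sec:BC}; the two unit eigenvalues $\lambda_1^{\mathcal{AR}}=\lambda_2^{\mathcal{AR}}=1$ are supplied by Lemma~3.1 of~\cite{serra2003antirefl} cited above, and the remaining $n-2$ eigenvalues of $W^{\mathcal{AR}}$ coincide with the $\hat\lambda_i^{\mathcal{AR}}$ just treated.

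The main obstacle I anticipate is bookkeeping rather than conceptual: the squaring-to-convolution identity has to be verified at the scalar (eigenvalue) level in all three cases, because for Reflective and especially Anti-Reflective matrices it is not automatic from a matrix algebra homomorphism, only Circulants multiplying cleanly via $\mathbf{v}\ast\mathbf{v}$ when $2l'\le\lfloor(n-1)/2\rfloor$. Working directly with the closed-form eigenvalue expressions, however, sidesteps this subtlety and treats the three boundary conditions uniformly.
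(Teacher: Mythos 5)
Your proposal is correct and follows essentially the same route as the paper: the paper likewise obtains $\sigma(W^{\mathcal{BC}})\subseteq[0,1]$ from the factorization $W^{\mathcal{BC}}=(V^{\mathcal{BC}})^{2}$ (invoked via the matrix-algebra structure rather than your scalar trigonometric verification of $\lambda_i(\mathbf{w})=\mu_i(\mathbf{v})^2$, which is a legitimate and slightly more self-contained way to justify the same identity) combined with Lemma~\ref{lemma:1}, and then locates the unit eigenvalues directly from the explicit formulas \eqref{equ:eigP}, \eqref{equ:eigR}, \eqref{equ:eigAR}. If anything, your treatment of the strict inequality --- equality to $1$ forces $\cos(j\theta_i)=1$ for \emph{all} $j$, which via $j=1$ and $v_1>0$ pins down $\theta_i\in 2\pi\mathbb{Z}$ --- is tighter than the paper's assertion that every cosine in the sum is strictly less than $1$ (false for some $j>1$, e.g.\ $n$ even, $i-1=n/2$, $j=2$), although the paper's conclusion survives because the $j=1$ term alone already pushes the convex combination strictly below $1$.
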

\begin{proof}
As already said,
in case of Periodic or Reflective or Anti-Reflective BCs we are in a matrix algebra;
this means that the matrix product gives rise to a matrix
which can still be interpreted as a matrix constructed from a filter and the same BCs.
In particular,
thanks to the fact that $l' \leq \lfloor \frac{n-1}{4} \rfloor$,
such filter can be computed by means of convolution.
Thus, if we denote by $V^{\mathcal{BC}}$ the structured matrix associated with filter $\mathbf{v}$,
we have that $W^{\mathcal{BC}} = (V^{\mathcal{BC}})^2$.
By Lemma \ref{lemma:1}, $\sigma (V^{\mathcal{BC}})\subseteq [-1,1]$,
therefore $\sigma (W^{\mathcal{BC}})\subseteq [0,1]$.

Moreover,
in case of Periodic BCs,
if we consider (\ref{equ:eigP}) with $i=1$, we get $\lambda_1^{\mathcal{P}}=1$,
while for $1 < i \leq n$ we have a convex combination of cosine values
(each of them strictly less than 1),
except for the first term multiplied by $w_0$ which is equal to 1.
Clearly this sum cannot equal to 1, so it has a value $0 \leq x < 1$.

In case of Reflective BCs,
if we consider (\ref{equ:eigR}) with $i=1$, we get $\lambda_1^{\mathcal{R}}=1$,
while for $1 < i \leq n$ we have a convex combination of cosine values
(each of them strictly less than 1),
except for the first term multiplied by $w_0$ which is equal to 1.
Clearly this sum cannot equal to 1, so it has a value $0 \leq x < 1$.

In case of Anti-Reflective BCs,
if we consider (\ref{equ:eigAR}) for $i=1,\ldots,n-2$
we have a convex combination of cosine values (each of them strictly less than 1),
except for the first term multiplied by $w_0$ which is equal to 1.
Clearly this sum cannot equal to 1, so it has a value $0 \leq x < 1$.
Therefore $n-2$ eigenvalues of $W^{\mathcal{BC}}$ belong to the interval $[0,1)$,
while the remaining two are equal to 1, as we already know.
\end{proof}

In the following Lemma we summarize diagonalization results presented in Section \ref{sec:BC}.

\begin{lemma}
\label{lemma:2}
Let $W^{\mathcal{BC}} \in \R^{n \times n}$ be a matrix constructed from
a symmetric filter $\mathbf{w}$ and some BCs (Periodic or Reflective or Anti-Reflective).
Then
\begin{equation}
W^{\mathcal{BC}}  = Q^{\mathcal{BC}} D^{\mathcal{BC}} (Q^{\mathcal{BC}} )^{-1},
\end{equation}
i.e. $W^{\mathcal{BC}}$ can be diagonalized by $Q^{\mathcal{BC}}$,
whose columns are eigenvectors of $W^{\mathcal{BC}}$ .
\end{lemma}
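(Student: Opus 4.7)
The plan is to verify the claim by treating the three BC cases separately, each time assembling a diagonalization from material already collected in Section \ref{sec:BC}.

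For the Periodic case, $W^{\mathcal{P}}$ is circulant, so it lies in the matrix algebra generated by the shift operator, which is well known to be simultaneously diagonalized by the Fourier matrix. Concretely, I would verify directly that each column of $Q^{\mathcal{P}}$ is an eigenvector by applying $W^{\mathcal{P}}$ to the $i$-th column and recognizing, via elementary exponential identities and the symmetry $w_{-j}=w_j$, the scalar multiplier (\ref{equ:eigP}). Since $Q^{\mathcal{P}}$ is unitary we have $(Q^{\mathcal{P}})^{-1}=(Q^{\mathcal{P}})^\ast$, and the diagonalization $W^{\mathcal{P}}=Q^{\mathcal{P}}D^{\mathcal{P}}(Q^{\mathcal{P}})^{-1}$ with $D^{\mathcal{P}}=\mathrm{diag}(\lambda_i^{\mathcal{P}})$ follows.

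For the Reflective case, I would argue that $W^{\mathcal{R}}=T+H^{\mathcal{R}}$ belongs to the DCT-III algebra, whose spectral decomposition is exactly $Q^{\mathcal{R}} D^{\mathcal{R}} (Q^{\mathcal{R}})^T$ with $Q^{\mathcal{R}}$ orthogonal. The eigenvalues are then obtained via $\lambda_i^{\mathcal{R}}=[Q^{\mathcal{R}}(W^{\mathcal{R}}\mathbf{e}_1)]_i / [Q^{\mathcal{R}}\mathbf{e}_1]_i$, and the explicit form (\ref{equ:eigR}) was derived in Section \ref{sec:BC}. Since $Q^{\mathcal{R}}$ is orthogonal, $(Q^{\mathcal{R}})^{-1}=(Q^{\mathcal{R}})^T$, and the claim follows.

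The Anti-Reflective case is the main obstacle, since $Q^{\mathcal{AR}}$ is not unitary and the matrix $W^{\mathcal{AR}}$ has a genuine block structure with a two-dimensional eigenspace at $\lambda=1$. My plan is to use the block decomposition $W^{\mathcal{AR}}=PW^{\mathcal{AR}}P^T$ for the interior block and exploit that $\hat{W}^{\mathcal{AR}}=\hat{T}-\hat{H}^{\mathcal{AR}}$ lies in the DST-I algebra, hence $\hat{W}^{\mathcal{AR}}=\hat{Q}^{\mathcal{AR}}\hat{D}^{\mathcal{AR}}\hat{Q}^{\mathcal{AR}}$ with eigenvalues (\ref{equ:eigAR}). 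Combined with the observation already noted in Section \ref{sec:BC} that $u_1^{\mathcal{AR}}=(0,1,\ldots,n-1)^T$ and $u_2^{\mathcal{AR}}=(n-1,\ldots,1,0)^T$ are eigenvectors with eigenvalue $1$, Lemma 3.1 of \cite{serra2003antirefl} guarantees that the complete spectrum consists of $\hat{\lambda}_i^{\mathcal{AR}}$ together with the double eigenvalue $1$.

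To finish, I would verify by direct block multiplication that the matrix $Q^{\mathcal{AR}}$ displayed in (\ref{eq:ART}), which stacks the normalized vectors $u_1^{\mathcal{AR}},u_2^{\mathcal{AR}}$ in the first and last columns and $\hat{Q}^{\mathcal{AR}}$ (padded with zero first/last rows) in the inner block, satisfies $W^{\mathcal{AR}} Q^{\mathcal{AR}} = Q^{\mathcal{AR}} D^{\mathcal{AR}}$ with $D^{\mathcal{AR}}=\mathrm{diag}(1,\hat{\lambda}_1^{\mathcal{AR}},\ldots,\hat{\lambda}_{n-2}^{\mathcal{AR}},1)$; the linear independence of the three sets of eigenvectors is immediate from the sparsity pattern (the first and last columns have nonzero entries in the first and last rows while the interior columns vanish there), so $Q^{\mathcal{AR}}$ is invertible and the diagonalization $W^{\mathcal{AR}}=Q^{\mathcal{AR}}D^{\mathcal{AR}}(Q^{\mathcal{AR}})^{-1}$ holds. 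This part of the argument is essentially bookkeeping using the structural identities of Section \ref{sec:BC}, but the non-normality of $W^{\mathcal{AR}}$ means one cannot shortcut with a unitary factor, which is why it is the delicate step.
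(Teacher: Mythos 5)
Your proposal is correct and follows essentially the same route as the paper, which states Lemma \ref{lemma:2} without a separate proof and simply appeals to the diagonalization material of Section \ref{sec:BC} (DFT for Circulant, DCT-III for Reflective, and the ART built from DST-I plus the two eigenvectors $u_1^{\mathcal{AR}}, u_2^{\mathcal{AR}}$ for Anti-Reflective, via \cite{arico2011,pietro2016,serra2003antirefl}). Your only addition is the explicit verification that $Q^{\mathcal{AR}}$ is invertible from its sparsity pattern, which is a sound and worthwhile detail the paper leaves implicit.
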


Now we are ready to discuss the convergence of DIF algorithm. The method in the limit produces IMFs that are projections of the given signal $\mathbf{s} $ onto the eigenspace of $W^{\mathcal{BC}}$ corresponding to the zero eigenvalue which has algebraic and geometric multiplicity $\zeta\in\{0,\ 1,\ldots,\ n-1\}$. Clearly, if  $W^{\mathcal{BC}}$ has only a trivial kernel then the method converges to the zero vector. On the other hand, if we enforce a stopping criterion, the techniques gets approximated IMFs after finitely many steps.
Theorem \ref{teo:2} generalizes theoretical results provided in \cite{cicone2017numerical} only for the case of Periodic BCs.

\begin{theorem}
\label{teo:2}
Let $\mathbf{s} \in \R^n$ be the signal that has to be decomposed.
Let $\mathbf{v}$ be a symmetric decreasing filter of length $0 < l' \leq \lfloor \frac{n-1}{4} \rfloor$
and $\mathbf{w} = \mathbf{v} \ast \mathbf{v}$ be another symmetric decreasing filter of length $0 < l \leq \lfloor \frac{n-1}{2} \rfloor$
defined by making the convolution of $\mathbf{v}$ with itself.
Let $W^{\mathcal{BC}} \in \R^{n \times n}$ be the matrix constructed from $\mathbf{w}$ and some BCs (Periodic or Reflective or Anti-Reflective),
which can be diagonalized by  $Q^{\mathcal{BC}} $,
and $\zeta$ be the number (in the set $\{0,\ 1,\ldots,\ n-1\}$) of its zero eigenvalues.
Let $\alpha^{\mathcal{BC}}$ and $\beta^{\mathcal{BC}}$ be two constants depending on BCs at hand ($\alpha^{\mathcal{P}}=\alpha^{\mathcal{R}}=1$, $\alpha^{\mathcal{AR}}=3$ and $\beta^{\mathcal{P}}=\beta^{\mathcal{R}}=1$, $\beta^{\mathcal{AR}}=2$).

Then, at step $k$ of the inner loop in the DIF method, the first IMF is given by
\begin{equation*}
\mathbf{f}_1 = Q^{\mathcal{BC}} (Z^{\mathcal{BC}} )^k  (Q^{\mathcal{BC}})^{-1} \mathbf{s},
\end{equation*}
where $Z^{\mathcal{BC}}  = I -  D^{\mathcal{BC}}$ can be rewritten in this way
\begin{equation*}
Z^{\mathcal{BC}}  = P \left(
                                          \begin{array}{ccccccc}
                                             1-\lambda^{\mathcal{BC}}_1  &   &   &   &   &   &   \\
                                              &  1-\lambda^{\mathcal{BC}}_2  &   &   &   &   &   \\
                                              &   & \ddots  &   &   &   &   \\
                                              &   &   &  1-\lambda^{\mathcal{BC}}_{n-\zeta}  &   &   &   \\
                                              &   &   &   &  1 &   &   \\
                                              &   &   &   &   &  \ddots &   \\
                                              &   &   &   &   &   & 1  \\
                                          \end{array}
                                        \right) P^T
\end{equation*}
by means of a suitable permutation matrix $P$, and the first $\beta^{\mathcal{BC}}$ elements that appear on the diagonal are equal to zero.
Letting $k$ go to infinity, the first outer loop step of the DIF method converges to
\begin{equation*}
\mathbf{f}_1 = Q^{\mathcal{BC}}  Z_{\infty}  (Q^{\mathcal{BC}})^{-1} \mathbf{s},
\end{equation*}
where $Z_{\infty}$
is a diagonal matrix with entries all zero,
except $\zeta$ diagonal elements equal to one.

Moreover,
fixed $\delta>0$,
for the minimum $k_0\in\N$ such that it holds true the inequality
\begin{equation*}
    \frac{k_0^{k_0}}{\left(k_0+1\right)^{k_0+1}}<\frac{\delta}{\alpha^{\mathcal{BC}} \|(Q^{\mathcal{BC}})^{-1} \mathbf{s}\|_\infty{\sqrt{n-\beta^{\mathcal{BC}}-\zeta}}},
\end{equation*}
we have that the following stopping criterion is satisfied
\begin{equation*}
\left\| \mathbf{s}_{k+1}^1-\mathbf{s}_k^1\right\|_{2}<\delta, \hspace{0.5cm} \forall k\geq k_0,
\end{equation*}
so the DIF algorithm takes finitely many steps to produce the first IMF.
\end{theorem}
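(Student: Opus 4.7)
The plan is to combine the diagonalisation of Lemma \ref{lemma:2} with the spectral location provided by Theorem \ref{teo:1}, pass to the limit in the diagonal form, and reduce the stopping-criterion inequality to a one-variable optimisation on $(1-\lambda)^{k-1}\lambda$. First, iterating (\ref{eq:MatrixForm}) with $\mathbf{s}_1^1=\mathbf{s}$ yields $\mathbf{s}_{k+1}^1=(I-W^{\mathcal{BC}})^k\mathbf{s}$, and inserting $W^{\mathcal{BC}}=Q^{\mathcal{BC}}D^{\mathcal{BC}}(Q^{\mathcal{BC}})^{-1}$ immediately gives $\mathbf{s}_{k+1}^1=Q^{\mathcal{BC}}(Z^{\mathcal{BC}})^k(Q^{\mathcal{BC}})^{-1}\mathbf{s}$ with $Z^{\mathcal{BC}}=I-D^{\mathcal{BC}}$.

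By Theorem \ref{teo:1} the spectrum of $W^{\mathcal{BC}}$ splits into three groups: $\beta^{\mathcal{BC}}$ eigenvalues equal to $1$ (producing zeros on the diagonal of $Z^{\mathcal{BC}}$), $\zeta$ eigenvalues equal to $0$ (producing ones), and $n-\beta^{\mathcal{BC}}-\zeta$ lying in $(0,1)$ (producing entries in $(0,1)$). A permutation $P$ ordering the diagonal as zeros/middle/ones produces the displayed block form with the first $\beta^{\mathcal{BC}}$ entries equal to zero; letting $k\to\infty$ leaves the zero block and the $\zeta$ trailing ones fixed while the middle block decays geometrically, so $(Z^{\mathcal{BC}})^k\to Z_\infty$ entry-wise, where $Z_\infty$ is the diagonal matrix with $\zeta$ ones and $n-\zeta$ zeros. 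For the stopping criterion I would write
\begin{equation*}
\mathbf{s}_{k+1}^1-\mathbf{s}_k^1 \;=\; -(I-W^{\mathcal{BC}})^{k-1}W^{\mathcal{BC}}\mathbf{s} \;=\; -Q^{\mathcal{BC}}(Z^{\mathcal{BC}})^{k-1}D^{\mathcal{BC}}(Q^{\mathcal{BC}})^{-1}\mathbf{s};
\end{equation*}
in the permuted basis the diagonal matrix $(Z^{\mathcal{BC}})^{k-1}D^{\mathcal{BC}}$ has at most $n-\beta^{\mathcal{BC}}-\zeta$ nonzero entries (the first $\beta^{\mathcal{BC}}$ are annihilated by $Z^{\mathcal{BC}}$, the last $\zeta$ by $D^{\mathcal{BC}}$), each of the form $(1-\lambda)^{k-1}\lambda$ with $\lambda\in(0,1)$. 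A calculus exercise yields $\max_{\lambda\in[0,1]}(1-\lambda)^{k-1}\lambda=(k-1)^{k-1}/k^k$ at $\lambda=1/k$, monotonically decreasing in $k$; this scalar bound is the source of the quantity $k_0^{k_0}/(k_0+1)^{k_0+1}$ appearing in the statement. Writing $y=(Q^{\mathcal{BC}})^{-1}\mathbf{s}$ and combining $\|v\|_2\le\sqrt{r}\|v\|_\infty$ for a vector with at most $r$ nonzero coordinates with $\|Q^{\mathcal{BC}}v\|_2\le\|Q^{\mathcal{BC}}\|_2\|v\|_2$ delivers the final inequality, provided $\alpha^{\mathcal{BC}}\ge\|Q^{\mathcal{BC}}\|_2$.

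The main obstacle is this last operator-norm estimate in the Anti-Reflective case. For Periodic and Reflective BCs it is immediate, since $Q^{\mathcal{P}}$ and $Q^{\mathcal{R}}$ are unitary, so $\alpha^{\mathcal{P}}=\alpha^{\mathcal{R}}=1$. The transform $Q^{\mathcal{AR}}$ in (\ref{eq:ART}) is not unitary: its central block is the unitary DST-I, but the first and last columns are built from the non-orthogonal eigenvectors $u_1^{\mathcal{AR}}$ and $u_2^{\mathcal{AR}}$. I would decompose $Q^{\mathcal{AR}}$ as the zero-padded DST-I embedded in $\R^{n\times n}$ plus two rank-one matrices supported on its first and last columns; each of the three pieces has spectral norm equal to $1$ (the DST-I piece because zero-padding a unitary matrix does not enlarge its norm, the rank-one pieces because the normalising factor $\eta$ forces their nonzero columns to have unit $\ell^2$-norm), and the triangle inequality then gives $\|Q^{\mathcal{AR}}\|_2\le 3=\alpha^{\mathcal{AR}}$ independently of $n$.
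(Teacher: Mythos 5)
Your proposal is correct and follows essentially the same route as the paper: diagonalisation via Lemma \ref{lemma:2}, the eigenvalue classification from Theorem \ref{teo:1}, the telescoping identity reducing $\mathbf{s}_{k+1}^1-\mathbf{s}_k^1$ to $Q^{\mathcal{BC}}(Z^{\mathcal{BC}})^{j}D^{\mathcal{BC}}(Q^{\mathcal{BC}})^{-1}\mathbf{s}$, the bound $\|v\|_2\le\sqrt{n-\beta^{\mathcal{BC}}-\zeta}\,\|v\|_\infty$ on the at most $n-\beta^{\mathcal{BC}}-\zeta$ nonzero coordinates, the one-variable maximisation of $(1-\lambda)^{j}\lambda$, and the identical three-term splitting of $Q^{\mathcal{AR}}$ yielding $\alpha^{\mathcal{AR}}=3$. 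The only discrepancy is a harmless index shift (you maximise $(1-\lambda)^{k-1}\lambda$ where the paper uses $(1-\lambda)^{k}\lambda$, which is what matches the stated threshold $k_0^{k_0}/(k_0+1)^{k_0+1}$), and the paper's own displayed computation contains the mirror-image off-by-one.
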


\begin{proof}
The first part of the theorem follows from the definition of DIF method in matrix form and from Lemma \ref{lemma:2}.
In fact,
at step $k$ of the inner loop in the DIF method, the first IMF is given by
\begin{equation*}
\mathbf{f}_1 = (I-W^{\mathcal{BC}})^k \mathbf{s} = Q^{\mathcal{BC}} (Z^{\mathcal{BC}} )^k  (Q^{\mathcal{BC}})^{-1} \mathbf{s},
\end{equation*}
Moreover, by Theorem \ref{teo:1} we know that $W^{\mathcal{BC}}$ has $\beta^{\mathcal{BC}}$ eigenvalues equal to 1.
On the other hand, letting $k$ go to infinity, we get
\begin{equation*}
\mathbf{f}_1=\lim_{k\rightarrow \infty} Q^{\mathcal{BC}} (Z^{\mathcal{BC}} )^k (Q^{\mathcal{BC}})^{-1} \mathbf{s}=
Q^{\mathcal{BC}}  Z_{\infty}  (Q^{\mathcal{BC}})^{-1} \mathbf{s}.
\end{equation*}

The second part of the theorem follows from the next expression
\small
\begin{eqnarray*}
 \|\mathbf{s}_{k+1}^1 - \mathbf{s}_k^1\|_2 &=& \|(I-W^{\mathcal{BC}})^{k+1}\mathbf{s} - (I-W^{\mathcal{BC}})^{k} \mathbf{s}\|_2 \\
 &=&\| Q^{\mathcal{BC}} (I-D^{\mathcal{BC}})^{k+1} (Q^{\mathcal{BC}})^{-1}  \mathbf{s} - Q^{\mathcal{BC}} (I-D^{\mathcal{BC}})^k (Q^{\mathcal{BC}})^{-1}  \mathbf{s} \|_2 \\
 &=& \| Q^{\mathcal{BC}} (Z^{\mathcal{BC}})^k ((I-D^{\mathcal{BC}})-I) (Q^{\mathcal{BC}})^{-1}  \mathbf{s}  \|_2 \\
 &=& \| Q^{\mathcal{BC}} (Z^{\mathcal{BC}})^k D^{\mathcal{BC}} (Q^{\mathcal{BC}})^{-1}  \mathbf{s}  \|_2 \\
 &\leq& \alpha^{\mathcal{BC}}  \| (Z^{\mathcal{BC}})^k D^{\mathcal{BC}} (Q^{\mathcal{BC}})^{-1}  \mathbf{s}  \|_2 \\
 &\leq&  \alpha^{\mathcal{BC}}  \left\|P \left(
                                          \begin{array}{cccccccc}
                                             (1-\lambda^{\mathcal{BC}}_1 )^k \lambda^{\mathcal{BC}}_1   &   &   &   &   &  &  \\
                                              &   \ddots   &   &   &   & \\
                                              &   & (1-\lambda^{\mathcal{BC}}_{n-\zeta} )^k \lambda^{\mathcal{BC}}_{n-\zeta}  &   &  & \\
                                              &    &   & 0  &  & \\
                                              &    &   &   & \ddots & \\
                                              &    &   &   &  & 0 \\
                                          \end{array}
                                        \right) P^T\left(
                                                 \begin{array}{c}
                                                   \| (Q^{\mathcal{BC}})^{-1}  \mathbf{s}  \|_\infty \\
                                                   \vdots \\
                                                   \| (Q^{\mathcal{BC}})^{-1}  \mathbf{s} \|_\infty \\
                                                 \end{array}
                                               \right)
                                        \right\|_2 \\
  &\leq & \alpha^{\mathcal{BC}}  {\sqrt{n-\beta^{\mathcal{BC}} -\zeta}} \left(1-\frac{1}{k+1} \right)^k \frac{1}{k+1}  \| (Q^{\mathcal{BC}})^{-1}  \mathbf{s} \|_\infty
\end{eqnarray*}
\normalsize
where $P$ is a suitable permutation matrix and $\alpha^{\mathcal{BC}}$ depends on the value of $\| Q^{\mathcal{BC}}  \|_2$.
So it is equal to $1$ for Periodic and Reflective BCs, since $Q^{\mathcal{P}}$ and $Q^{\mathcal{R}}$ are unitary matrices,
while in the Anti-Reflective case it is equal to $3$ because we can rewrite $Q^{\mathcal{AR}}$
-- see (\ref{eq:ART}) --
as the sum of three matrices,
having non-zero elements only in the first column, in the last column and in the central part
(corresponding to $\hat{Q}^{\mathcal{AR}}$, which is unitary),
then we can apply triangle inequality, exploiting also the fact that the Euclidean norm of each splitting matrix is equal to $1$.
For writing the last inequality, we use the fact that the function $(1-\lambda)^k \lambda$ achieves its maximum at $\lambda=\frac{1}{k+1}$ for $\lambda\in[0,\ 1]$.
Hence the stopping criterion is fulfilled for $k_0$ minimum natural number such that
$\frac{k_0^{k_0}}{\left(k_0+1\right)^{k_0+1}}<\frac{\delta}{\alpha^{\mathcal{BC}} \|(Q^{\mathcal{BC}})^{-1} \mathbf{s}\|_\infty{\sqrt{n-\beta^{\mathcal{BC}}-\zeta}}}$.
\end{proof}

We make here the observation that with the current version of the algorithm we have to reimpose the BCs at every iteration of the inner loop.
In order to reduce the error and its propagation, the idea is to impose the BCs only in the first step of each inner loop, and then let the solution to evolve freely.
This is the reason for introducing the extended approach.

\section{Extended Iterative Filtering}
\label{sec:Error}

Now, once defined the matrix
\begin{equation}
R = \left(\ O_{n\times p}\; I_{n\times n}\; O_{n\times p}\ \right)_{n\times (n+2p)},
\end{equation}
where $O$ is matrix of all zeros and $I$ is the identity matrix, in Algorithm \ref{algo:IF_discrete} we present the pseudocode of Extended Iterative Filtering (EIF) algorithm.
Differently from the DIF method,
it is not based (as DIF algorithm) on a structured matrix $W_{m}^{\mathcal{BC}}\in \R^{n\times n}$
(whose structure depends on BCs),
associated with the original signal $\mathbf{s} \in \R$,
but on a Circulant matrix $\ddot{W}_m^{\mathcal{P}} \in \R^{(n+2p) \times (n+2p)}$
(whose structure does not depend on BCs),
associated with a signal $\mathbf{s}^{\mathcal{BC}} \in \R^{(n+2p)}$
extended by means of any BCs.
Hence, since in this framework the BCs do not affect the matrix structure, we have more freedom in their choice.
Any method that extends the original signal in a suitable way outside the boundaries can be employed now.
Moreover, we notice that in Algorithm \ref{algo:EIF} there is only one step involving the use of BCs,
while in Algorithm \ref{algo:IF_discrete} the BCs are employed at every step of the Inner Loop.
Considering that every time we make use of BCs we are adding errors to the decomposition method,
we think that the proposed approach may improve the standard one.
Furthermore, we remark that employing Periodic BCs allows to reduce the computational burden of IF.
In fact the algorithm can be rewritten using Fast Fourier Transform (FFT)
to become what is known as Fast Iterative Filtering (FIF) \cite{cicone2017numerical}.

\begin{algorithm}
\caption{\textbf{Extended Iterative Filtering} IMFs = EIF$(s, h)$}\label{algo:EIF}
\begin{algorithmic}
\STATE $\mathbf{s}^{\mathcal{BC}}(x_i) = \mathbf{s}(x_i)$, $i=0,\ldots,n-1$
\STATE apply BCs to compute $\mathbf{s}^{\mathcal{BC}}(x_i)$, $i=-p,\ldots,-1$ and $i=n,\ldots,n-1+p$
\STATE $m=1$
\STATE $\mathbf{s}_1^m = \mathbf{s}^{\mathcal{BC}}$
\WHILE{the number of extrema of $\mathbf{s}_1^m$ $\geq 2$}
	  \STATE $k=1$	
      \STATE compute the filter length $l_m$ for the signal $\mathbf{s}_k^m$
      \STATE compute $\mathbf{w}_m$ (having $h$ and $l_m$)
      \WHILE{the stopping criterion is not satisfied}
                  \STATE  $\mathbf{s}_{k+1}^m = (I-\ddot{W}_m^{\mathcal{P}}) \mathbf{s}_{k}^m$
                  \STATE  $k = k+1$
      \ENDWHILE
      \STATE $\mathbf{f}_m = \mathbf{s}_{k}^m$
      \STATE $\mathbf{s}_1^{m+1} = \mathbf{s}_1^{m}-\mathbf{f}_{m}$
      \STATE $m = m+1$
\ENDWHILE
\STATE  $\mathbf{f}_m = \mathbf{s}_1^m$
\STATE IMFs = $\{ R \mathbf{f}_1,\ldots, R \mathbf{f}_m \}$
\end{algorithmic}
\end{algorithm}

We denote by $\mathbf{f}_1\in \R^n$ the first approximated IMF computed by the EIF algorithm.
$\mathbf{\bar{f}}_1\in\R^n$ represents the first exact IMF, whereas $\mathbf{s}^{\mathcal{BC}}\in\R^{n+2p}$ the signal extended according to some a priori chosen boundary conditions and $\ddot{\mathbf{s}}\in\R^{n+2p}$ the unknown real signal extended also outside the boundaries.
Then we can split these last two vectors considering the portion of signal inside and outside the field of view identified by the boundaries, and get
\begin{equation}
\mathbf{s}^{\mathcal{BC}} = \mathbf{s}_{\textrm{in}}^{\mathcal{BC}} + \mathbf{s}_{\textrm{out}}^{\mathcal{BC}}
\end{equation}
and
\begin{equation}
\ddot{\mathbf{s}} = \ddot{\mathbf{s}}_{\textrm{in}} + \ddot{\mathbf{s}}_{\textrm{out}}.
\end{equation}
Clearly $\mathbf{s}_{\textrm{in}}^{\mathcal{BC}} = \ddot{\mathbf{s}}_{\textrm{in}}$.

We have that
\begin{eqnarray*}
\mathbf{f}_1  &=& R (I-\ddot{W}_1^{\mathcal{P}})^{k} \mathbf{s}^{\mathcal{BC}} \\
&=& R (I-\ddot{W}_1^{\mathcal{P}})^{k} (\mathbf{s}_{\textrm{in}}^{\mathcal{BC}} + \mathbf{s}_{\textrm{out}}^{\mathcal{BC}} + \ddot{\mathbf{s}}_{\textrm{out}}-\ddot{\mathbf{s}}_{\textrm{out}}) \\
&=& R (I-\ddot{W}_1^{\mathcal{P}})^{k} \ddot{\mathbf{s}} + R (I-\ddot{W}_1^{\mathcal{P}})^{k} (\mathbf{s}_{\textrm{out}}^{\mathcal{BC}} - \ddot{\mathbf{s}}_{\textrm{out}})
\end{eqnarray*}
where the first term tends to $\mathbf{\bar{f}}_1$, while the second term it propagates itself from the boundaries inside the field of view as $k$ grows.

In general it is not possible to estimate the difference $\mathbf{s}_{\textrm{out}}^{\mathcal{BC}} - \ddot{\mathbf{s}}_{\textrm{out}}$.
However it is reasonable to assume that the approximation of the real signal outside the boundaries $\mathbf{s}_{\textrm{out}}^{\mathcal{BC}}$ produces an error $\mathbf{s}_{\textrm{out}}^{\mathcal{BC}} - \ddot{\mathbf{s}}_{\textrm{out}}$ which is a constant function. In order to estimate such error function we can assume that its value equals $\chi=\max\left(\left|\mathbf{s}_{\textrm{in}}^{\mathcal{BC}}\right|\right) = \max\left(\left|\ddot{\mathbf{s}}_{\textrm{in}}\right|\right)$. This is clearly in most cases an overestimation.

Now the question is how the error $R (I-\ddot{W}_1^{\mathcal{P}})^{k} (\mathbf{s}_{\textrm{out}}^{\mathcal{BC}} - \ddot{\mathbf{s}}_{\textrm{out}})$ propagates inside the field of view during the iterations.
We can estimate an upper bound for the error inside the field of view at step $k$ of the iterations using the following formula
\begin{equation}\label{eq:err_UB}
   \textrm{err}_k = R (I-\ddot{W}_1^{\mathcal{P}})^{k} \mathbf{u} = R \left(I - {{k}\choose{1}}\ddot{W}_1^{\mathcal{P}} + {{k}\choose{2}}\left(\ddot{W}_1^{\mathcal{P}}\right)^2 +\ldots (-1)^{k} \left(\ddot{W}_1^{\mathcal{P}}\right)^k \right) \mathbf{u},
\end{equation}
where $\mathbf{u}\in\R^{n+2p}$ is a vector equal to $\chi$ outside the field of view and $0$ inside.

A potential upper bound of the error for each point $x_i$ inside the field of view can be computed as
\begin{equation}\label{eq:err_UB2}
   \textrm{ub}_k (x_i) = \max_{j\leq k}\left(\left|\textrm{err}_j(x_i)\right|\right),
\end{equation}
where $k$ is the number of iterations.
This is just an estimate because  in general the actual error introduced by the extension of the signal outside the boundaries is unknown.
If we deal with artificial examples,
and we consider DIF algorithm employing some BCs,
we can compute the actual error using the formula
\begin{equation}\label{eq:err}
   \textrm{err}^{\mathcal{BC}}_k (x_i) = \left| \mathbf{f}_1 (x_i) - \mathbf{\bar{f}}_1 (x_i)  \right|,
\end{equation}
where $k$ is the number of iterations required to produce the approximated IMF $\mathbf{f}_1$,
while $\mathbf{\bar{f}}_1$ is the first exact IMF.

\FloatBarrier

\section{Numerical results}
\label{sec:Examples}

In order to show how DIF method works when we employ different BCs,
we present four simple examples in which the signal is artificially created through one IMF plus a trend.
In particular, we analyze how the errors introduced outside the boundaries propagate inside the field of view, evaluating the performance of formula \eqref{eq:err_UB} for the a priori estimation of the error when there is no knowledge on the behavior of the signal outside the boundaries.

\subsection{Example 1}

We consider a first signal, shown on the left of Figure \ref{fig:Test_ex_1},
which can safely extended periodically and anti-reflectively outside the boundaries.
On the right of Figure \ref{fig:Test_ex_1}
we can look at decomposition obtained after 3 steps of DIF method with Reflective BCs.
If we apply the DIF algorithm using Periodic, Reflective and Anti-Reflective BCs,
we can extract a first IMF and measure the difference, after a fixed number of steps,
between the exact IMF and the computed IMF measured in absolute.
In Figure \ref{fig:Test_ex_1b} we plot the errors introduced by each extension,
formula \eqref{eq:err},
and we compare them with the a priori estimate of the error computed using formula \eqref{eq:err_UB2}.
Due to a symmetry in the signal, it is sufficient to plot the errors for the left half of the field of view.
As expected,
we produce a minimal error in the decomposition if we extended the signal periodically or anti-reflectively. Furthermore,
the a priori upper bound estimate of the error proves to be correct.

\begin{figure}[h!]
    \centering
    \includegraphics[width=0.48\textwidth]{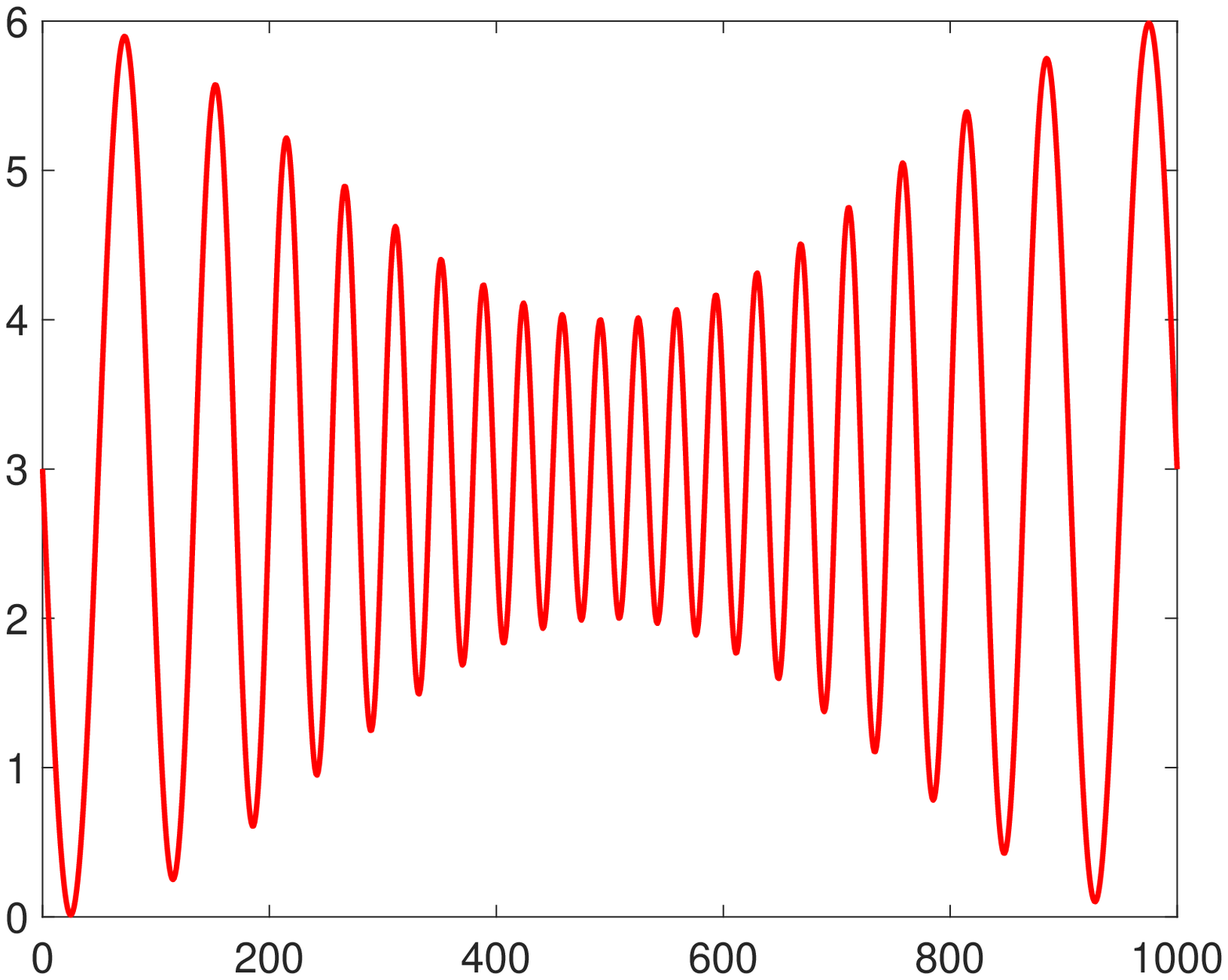}
    \includegraphics[width=0.48\textwidth]{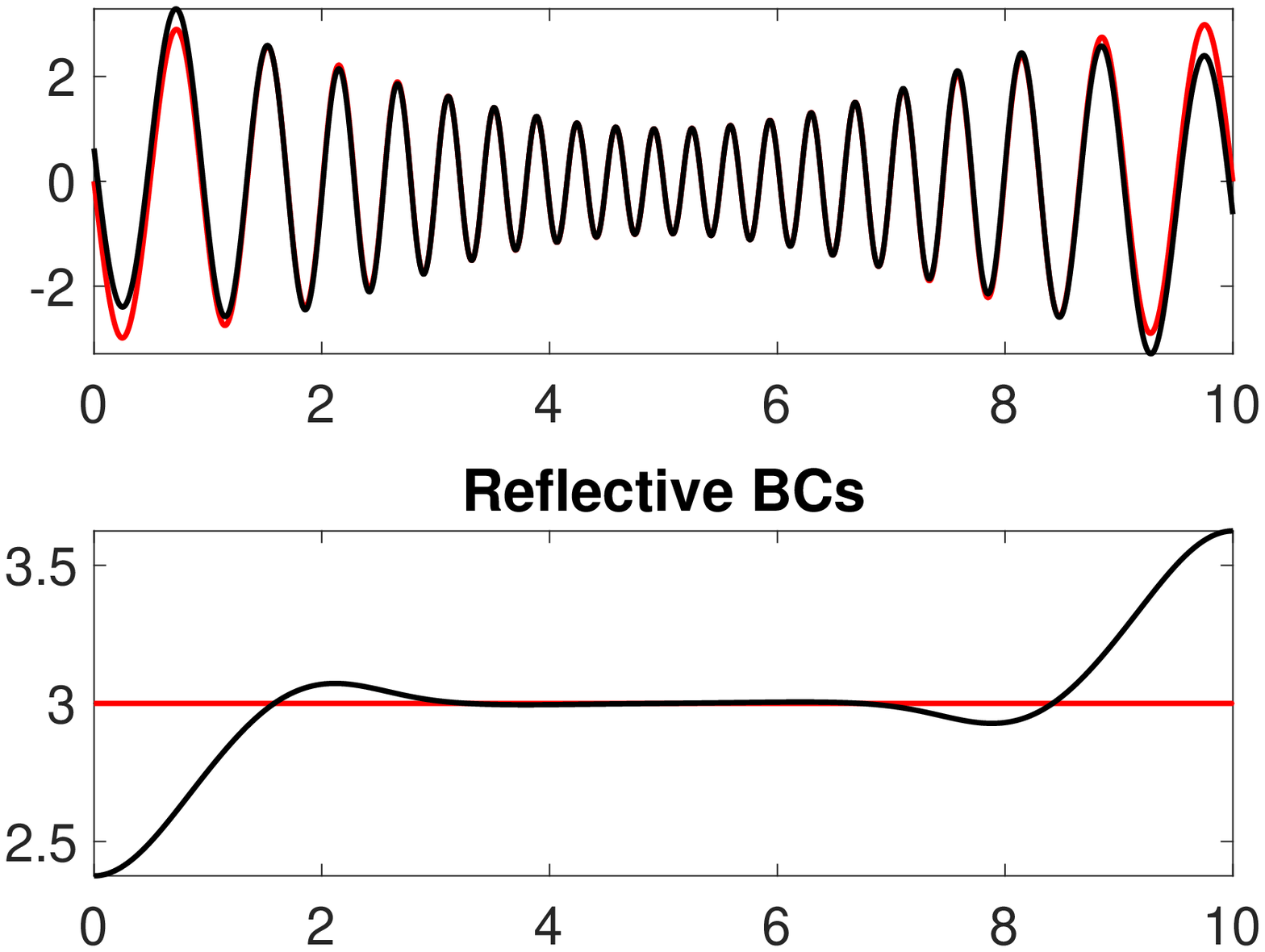}
    \caption{(Example 1) Left panel: signal. Right panel: decomposition computed by DIF method with Reflective BCs (black line) compared with exact one (red line).}\label{fig:Test_ex_1}
\end{figure}

\begin{figure}[h!]
    \centering
    \includegraphics[width=0.48\textwidth]{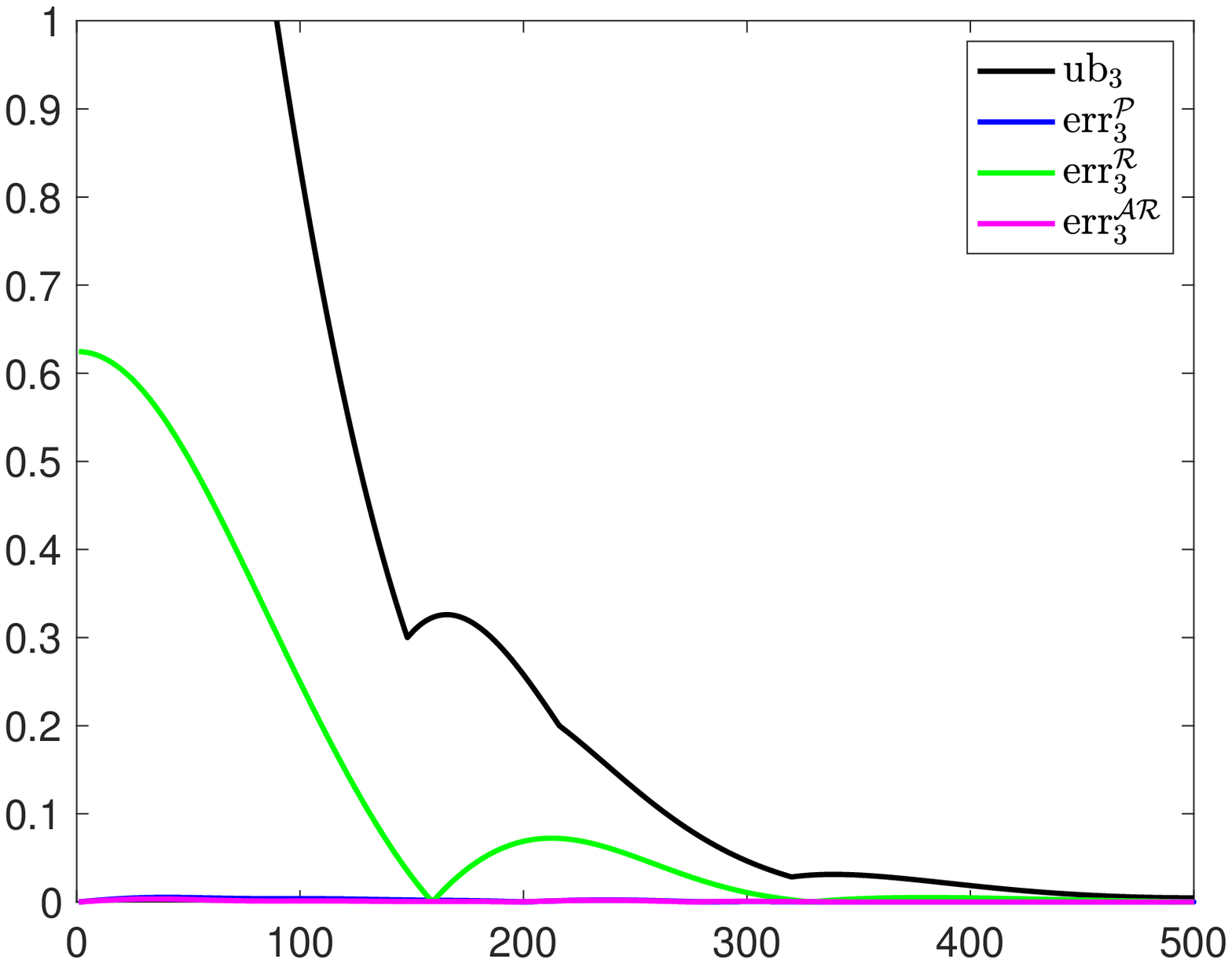}
    \includegraphics[width=0.48\textwidth]{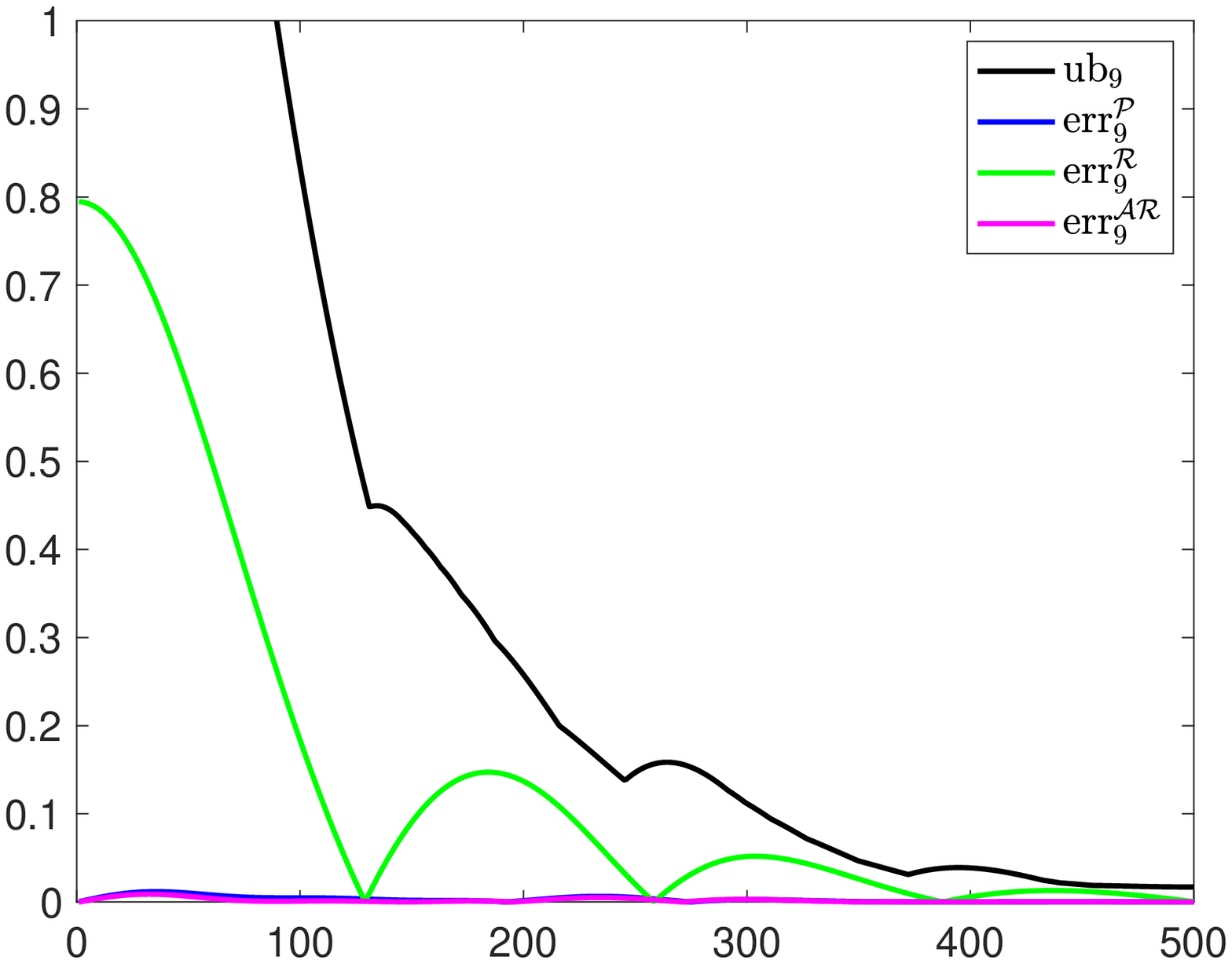}
    \caption{(Example 1) Upper bound and errors measured in absolute value for Periodic, Reflective and Anti-Reflective BCs relative to 3 iterations (on the left) and 9 iterations (on the right) of DIF method.}\label{fig:Test_ex_1b}
\end{figure}

\FloatBarrier

\subsection{Example 2}

In this second example we consider the signal plotted in the left panel of Figure \ref{fig:Test_ex_2},
while in the right panel
we can look at decomposition obtained after 3 steps of DIF method with Anti-Reflective BCs.
After extending the signal using Periodic, Reflective and Anti-Reflective BCs,
we extract a first IMF using the DIF method.
In Figure \ref{fig:Test_ex_2b} we compare the absolute values of the errors introduced by the different BCs, formula \eqref{eq:err}, and we compute the upper bound by means of \eqref{eq:err_UB2}.
Again, due to a symmetry in the curves, we report only the first half of the error plot.
In this case the reflective extension works best in minimizing the error in the decomposition.
Furthermore, the upper bound on the error allows to correctly estimate the error everywhere inside the field of view.

\begin{figure}[h!]
    \centering
    \includegraphics[width=0.48\textwidth]{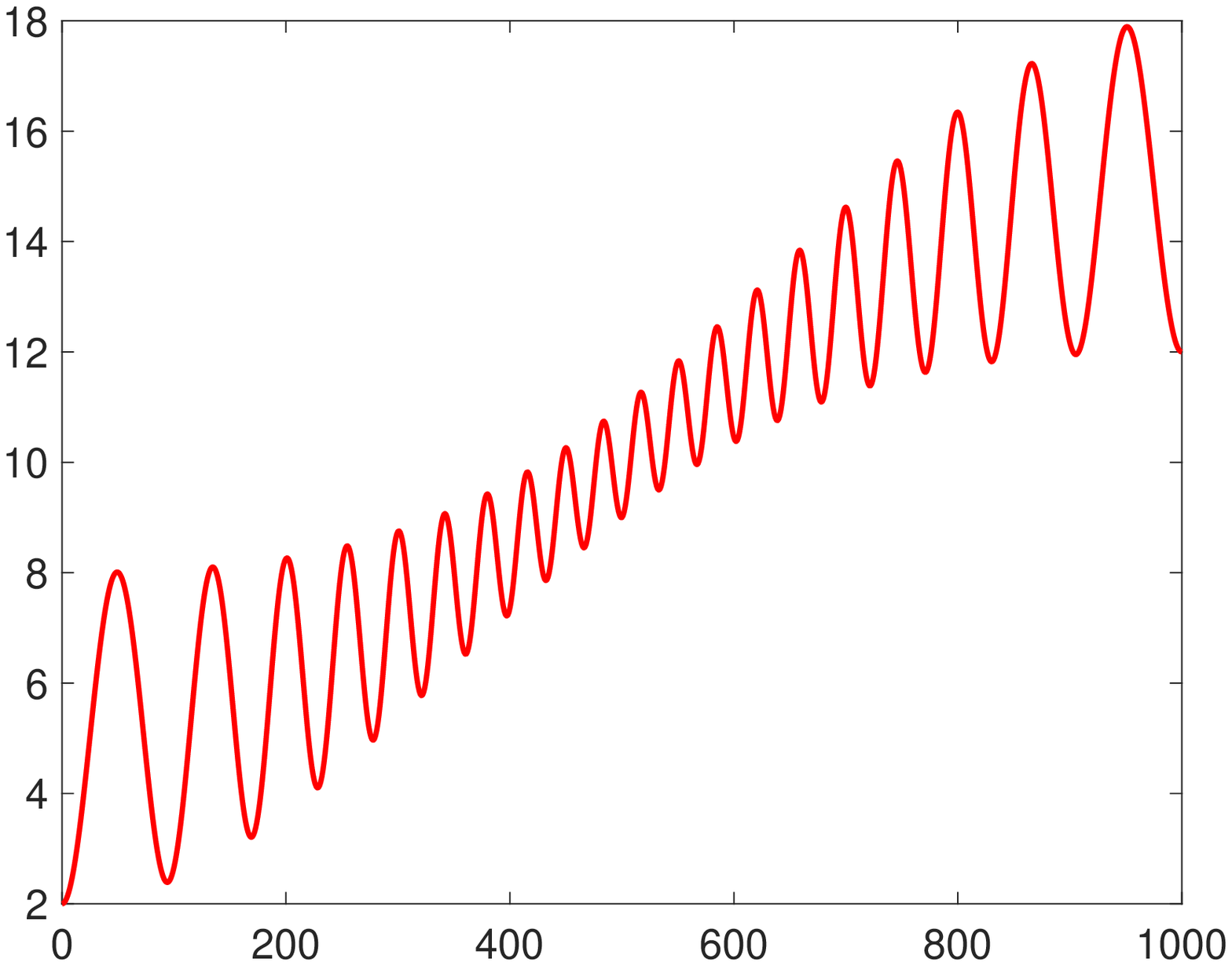}
    \includegraphics[width=0.48\textwidth]{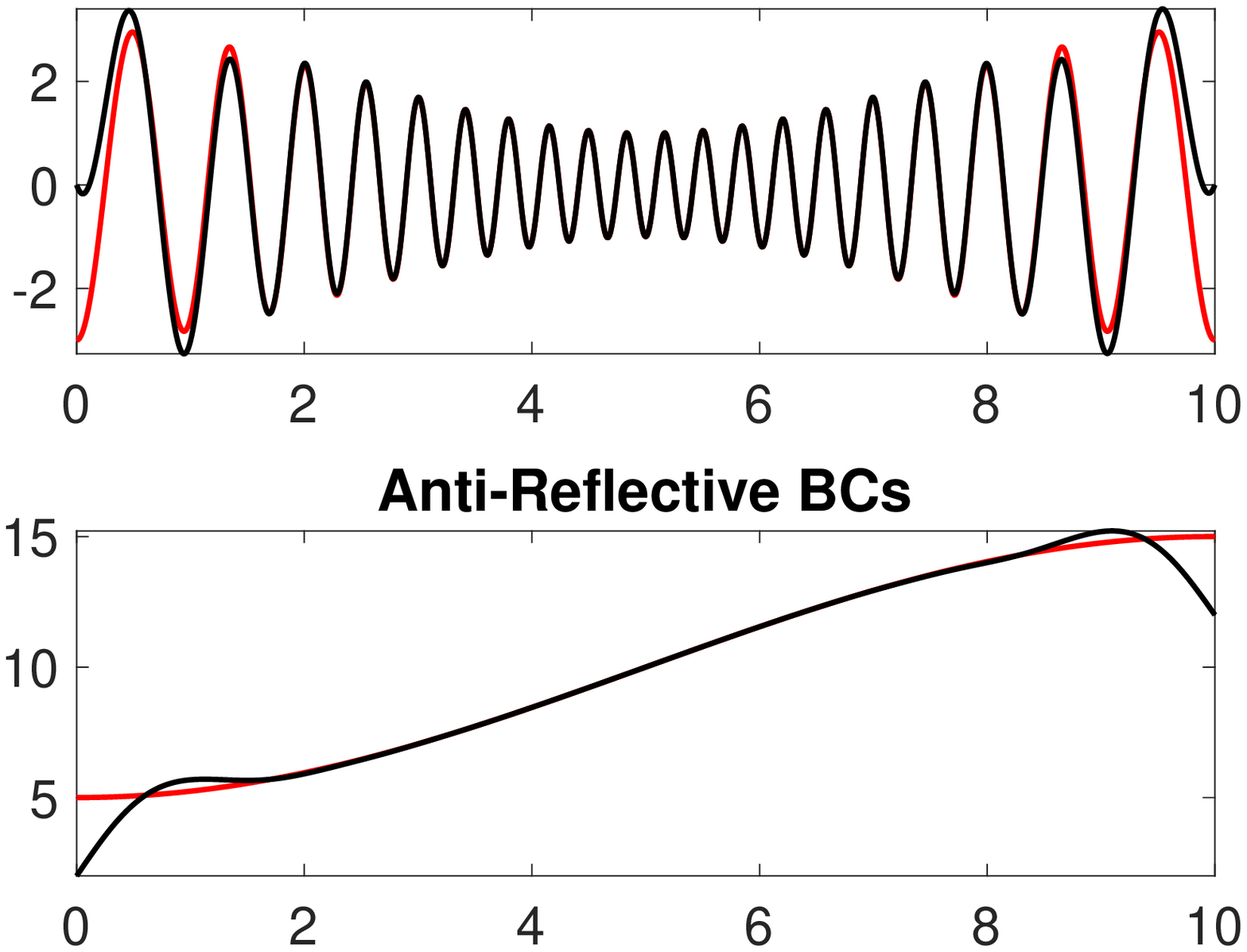}
    \caption{(Example 2) Left panel: signal. Right panel: decomposition computed by DIF method with Anti-Reflective BCs (black line) compared with exact one (red line).}\label{fig:Test_ex_2}
\end{figure}

\begin{figure}[h!]
    \centering
    \includegraphics[width=0.48\textwidth]{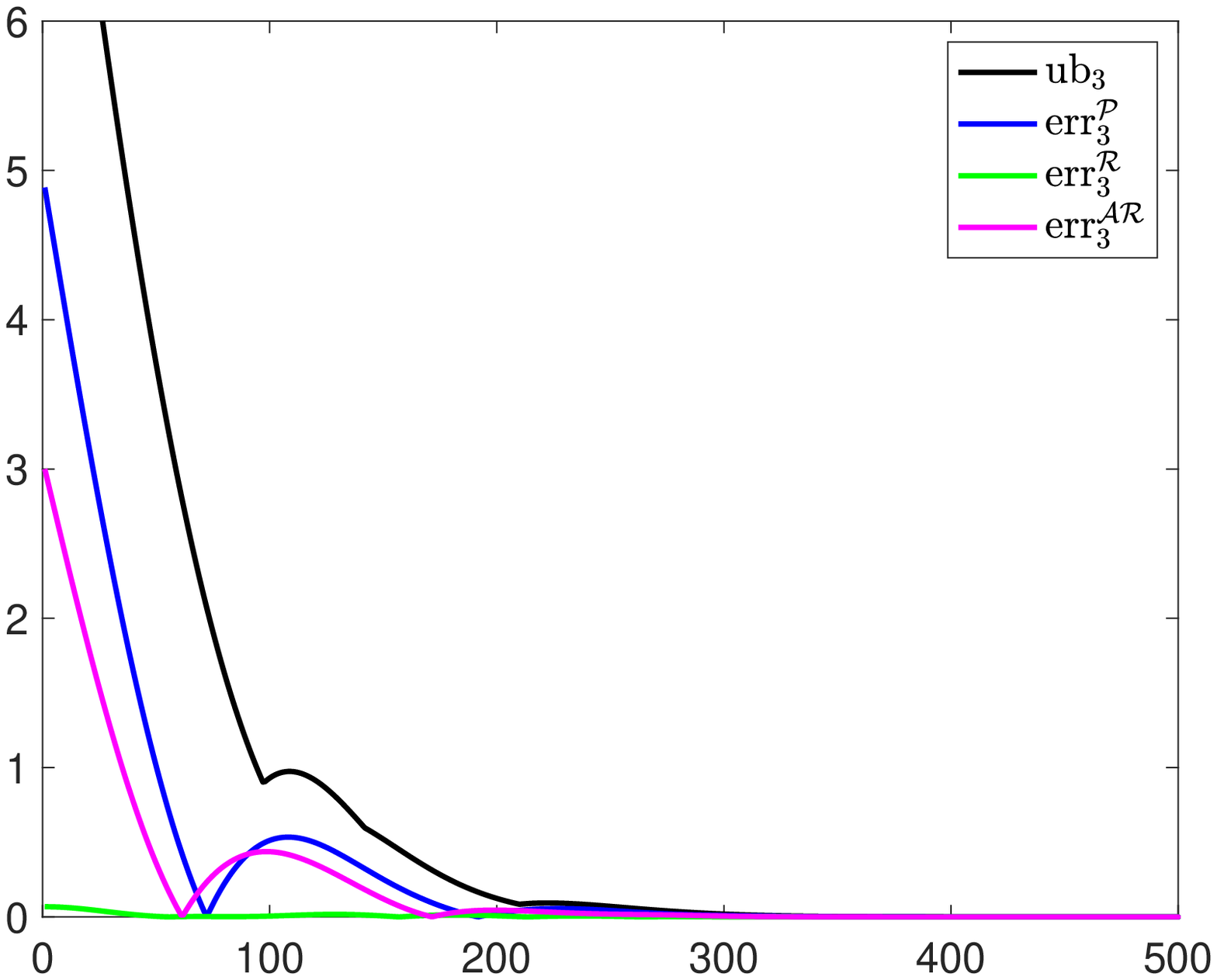}
    \includegraphics[width=0.48\textwidth]{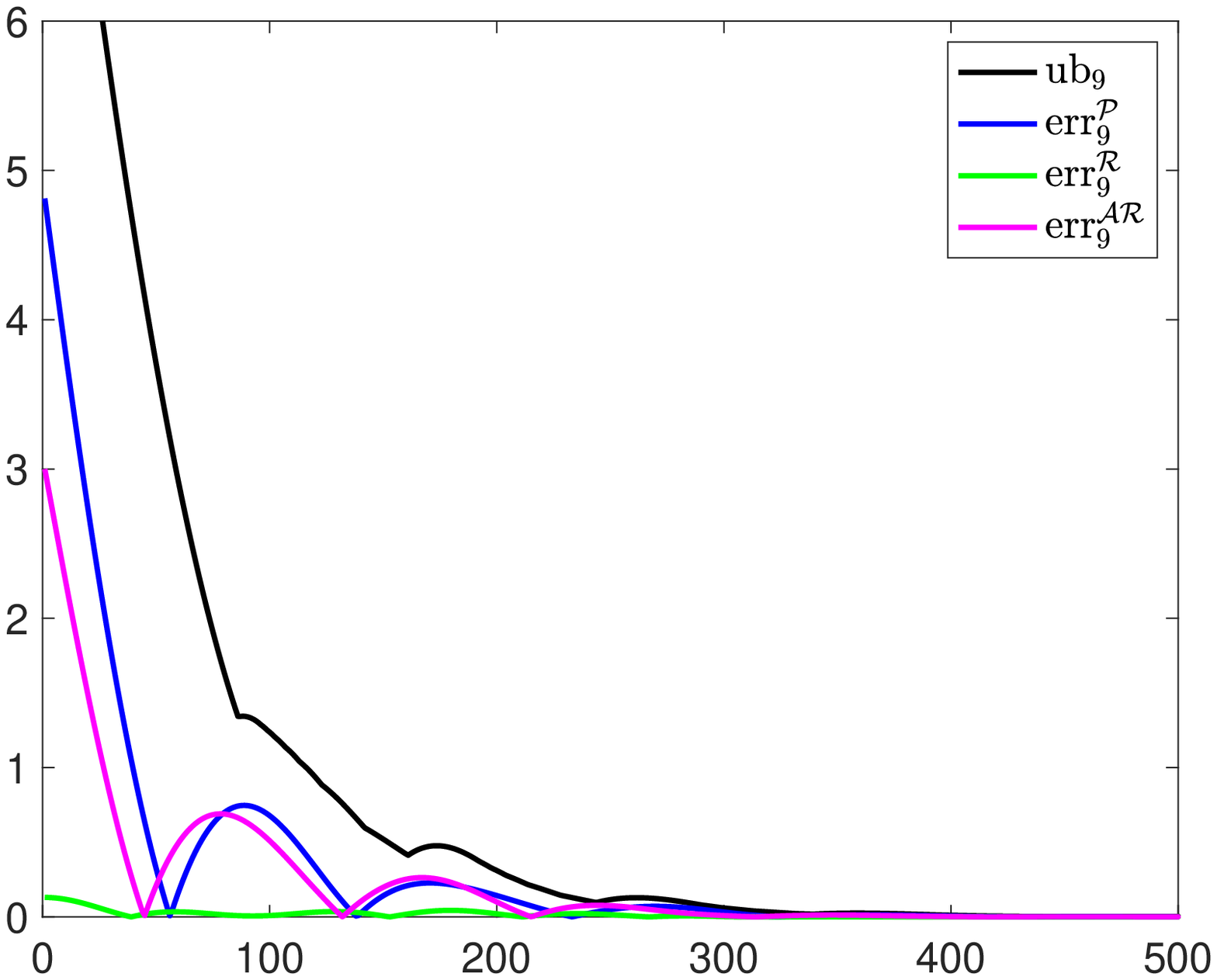}
    \caption{(Example 2) Upper bound and errors measured in absolute value for Periodic, Reflective and Anti-Reflective BCs relative to 3 iterations (on the left) and 9 iterations (on the right) of DIF method.}\label{fig:Test_ex_2b}
\end{figure}

\FloatBarrier

\subsection{Example 3}

In this case the signal, shown in the left panel of Figure \ref{fig:Test_ex_3}, is best extended outside the boundaries using Anti-Reflective BCs.
This fact is confirmed by the error curves \eqref{eq:err} plotted in Figure \ref{fig:Test_ex_3b}.
Due to symmetry properties of the signal under study, we report only the first half of the error curves.
Also in this example the upper bound allows to give a priori estimate of the behavior relative to the error propagation inside the field of view.

\begin{figure}[h!]
    \centering
    \includegraphics[width=0.48\textwidth]{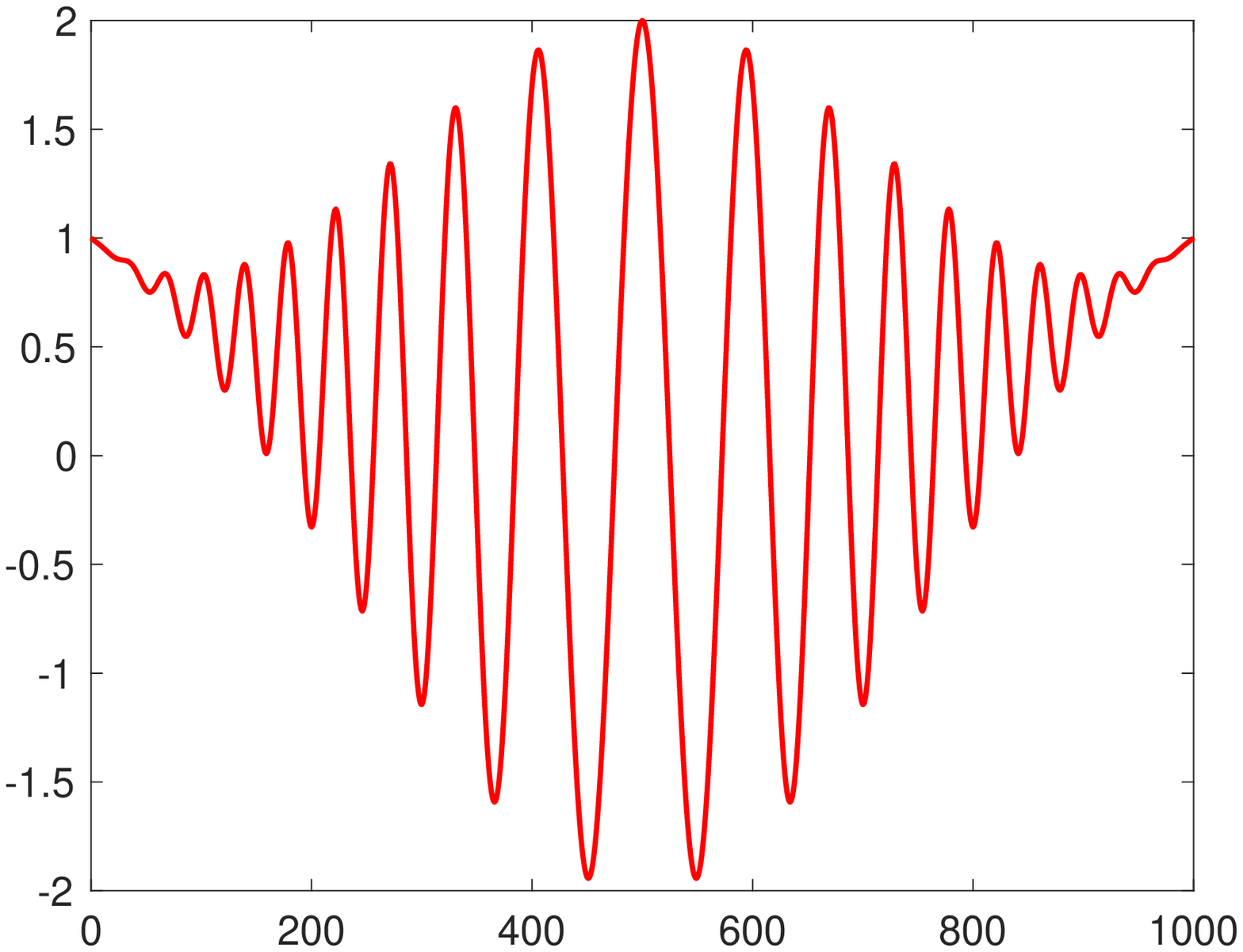}
    \includegraphics[width=0.48\textwidth]{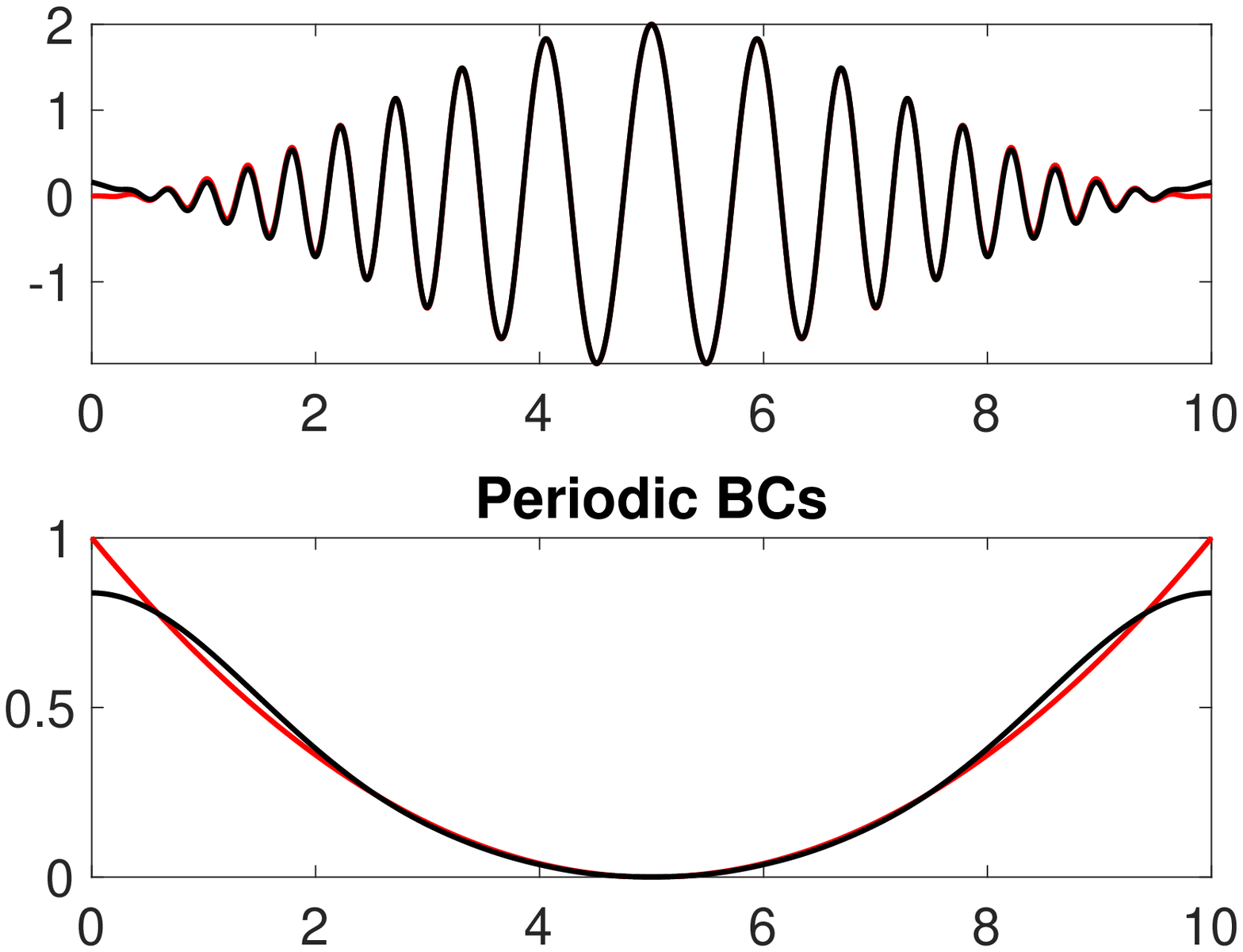}
    \caption{(Example 3)Left panel: signal. Right panel: decomposition computed by DIF method with Periodic BCs (black line) compared with exact one (red line).}\label{fig:Test_ex_3}
\end{figure}

\begin{figure}[h!]
    \centering
    \includegraphics[width=0.48\textwidth]{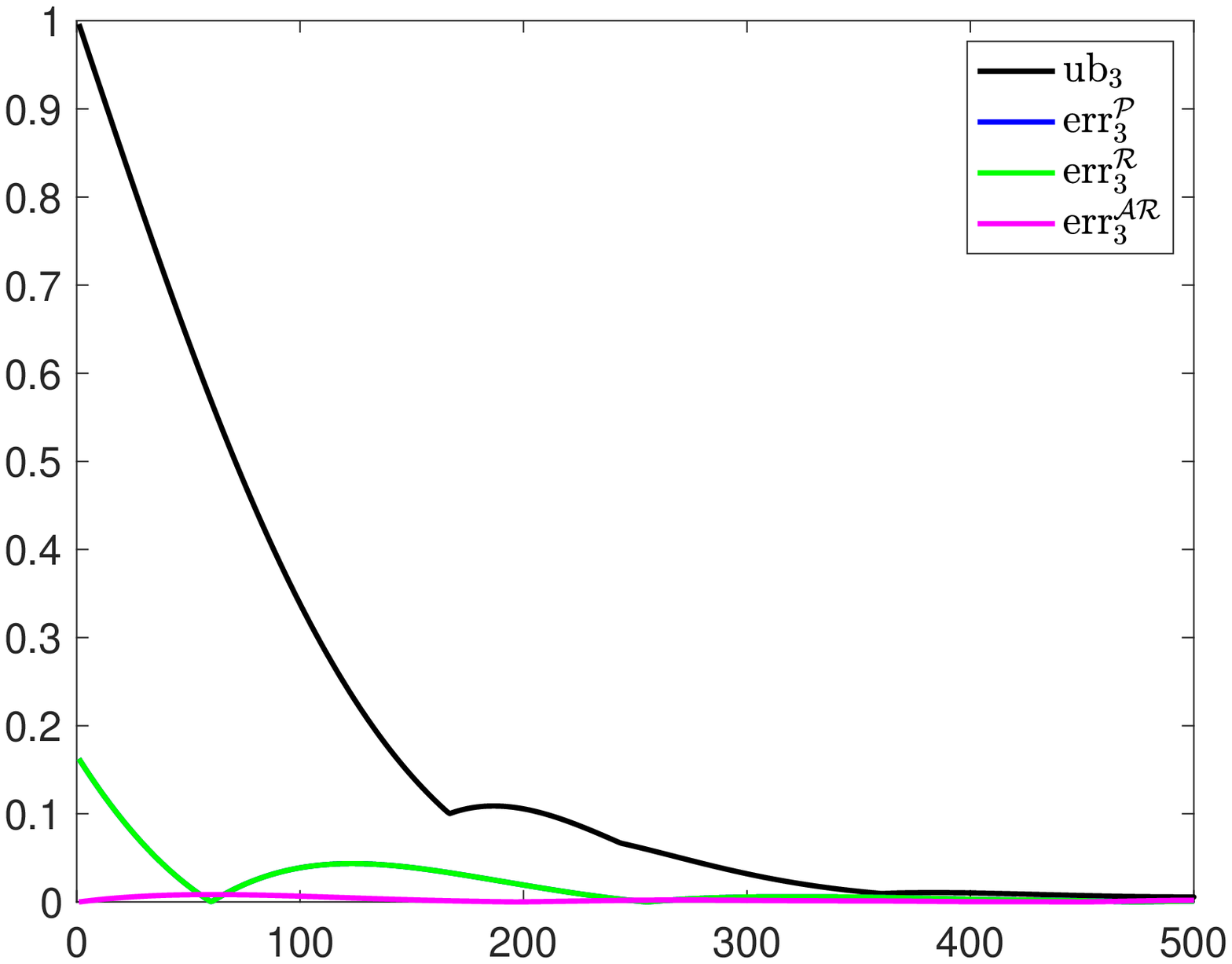}
    \includegraphics[width=0.48\textwidth]{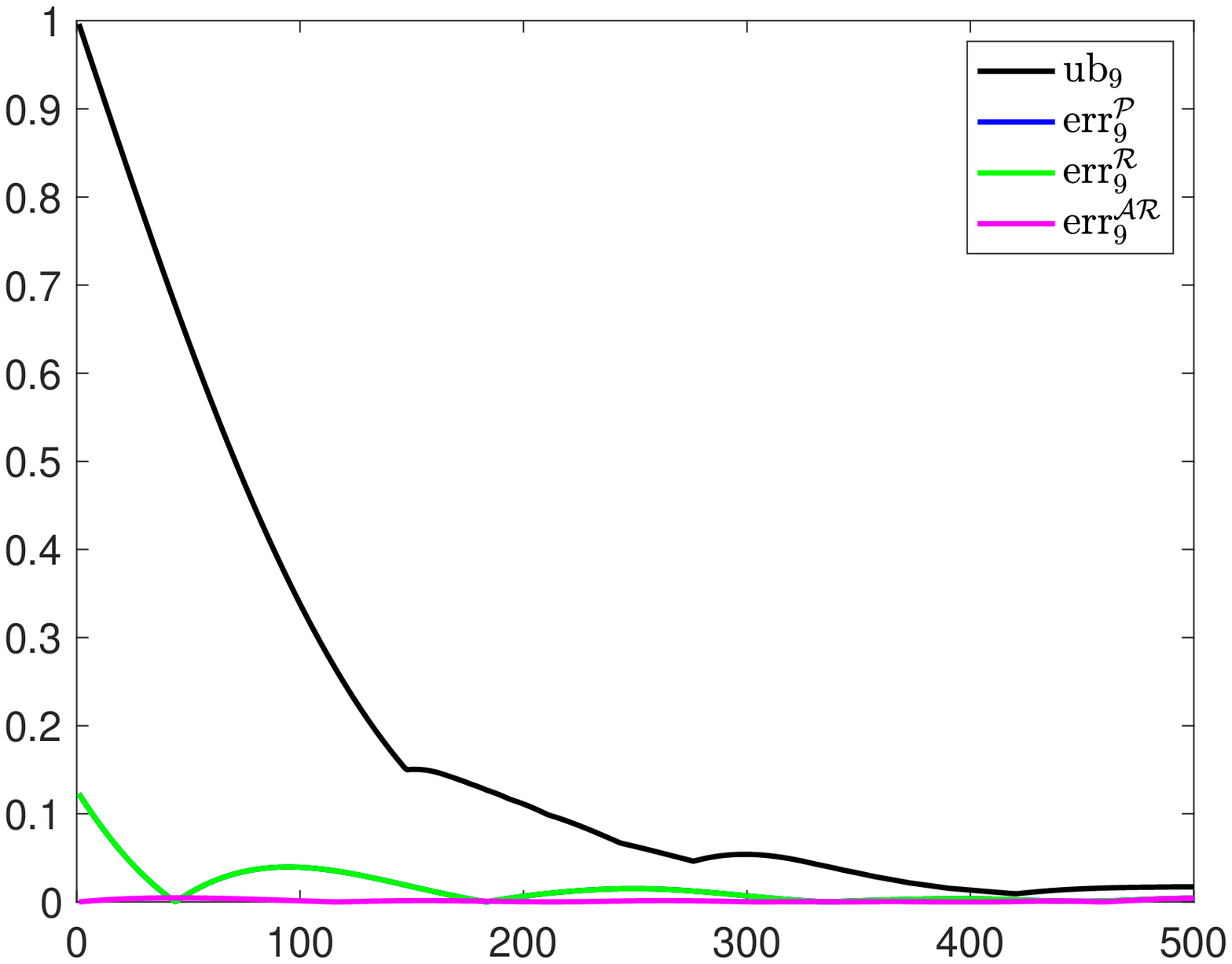}
    \caption{(Example 3)Upper bound and errors measured in absolute value for Periodic, Reflective and Anti-Reflective BCs relative to 3 iterations (on the left) and 9 iterations (on the right) of DIF method.}\label{fig:Test_ex_3b}
\end{figure}

\FloatBarrier

\subsection{Example 4}

In this example we want to study how the errors induced by the extension outside the boundaries depends on the phase of the signal at the boundary. We consider a simple signal, plotted in the left panel of Figure \ref{fig:Test_ex_4}, which is a superposition of a constant trend and a plain sine. The signal support is originally given by the interval $[-333,\ 0]$ which is then extended up to 300 step by step of $\triangle t = 0.01$. Each time we enlarge the support of a $\triangle t$ we redecompose the newly extended signal using DIF with different kind of BCs. For each BCs we compute the relative error
\begin{equation}\label{eq:rel_err}
    \textrm{err}^{\mathcal{BC}}_{\textrm{rel}} = \frac{\left\|\mathbf{f}_1 - \mathbf{\bar{f}}_1 \right\|_\infty}{\left\|\mathbf{\bar{f}}_1\right\|_\infty}
\end{equation}
and the relative error upper bound
\begin{equation}\label{eq:rel_err_UB}
    \textrm{ub}_{\textrm{rel}} = \frac{\left\|\textrm{ub}_k\right\|_\infty}{\left\|\mathbf{\bar{f}}_1 \right\|_\infty}
\end{equation}
where $\mathbf{\bar{f}}_1$ represents the first exact IMF and $\textrm{ub}_k$ is the error upper bound computed using \eqref{eq:err_UB2} for a fixed number of iterations $k$.
We plot the relative errors in the right panel of Figure \ref{fig:Test_ex_4}.
\begin{figure}[h!]
    \centering
    \includegraphics[width=0.48\textwidth]{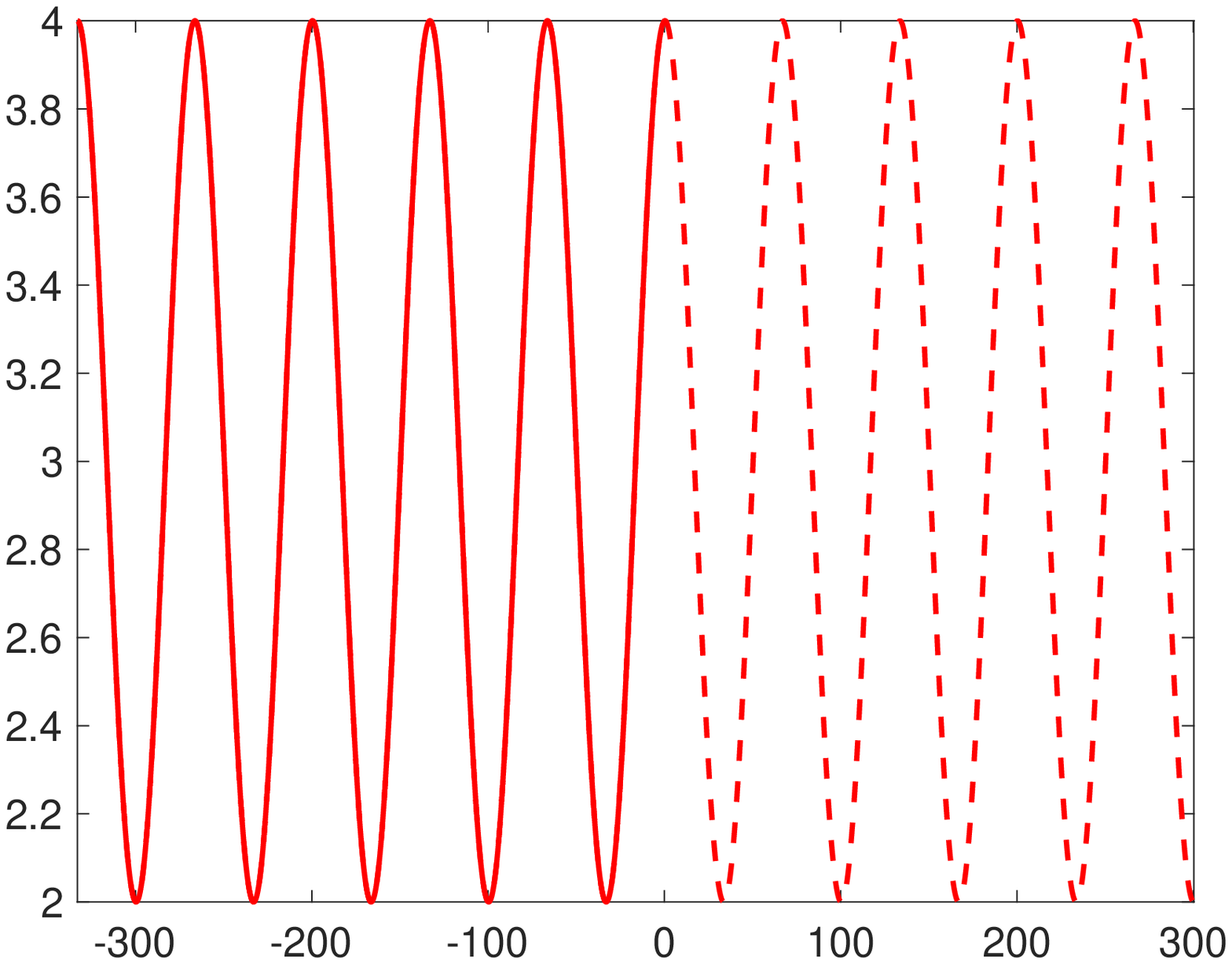}
    \includegraphics[width=0.48\textwidth]{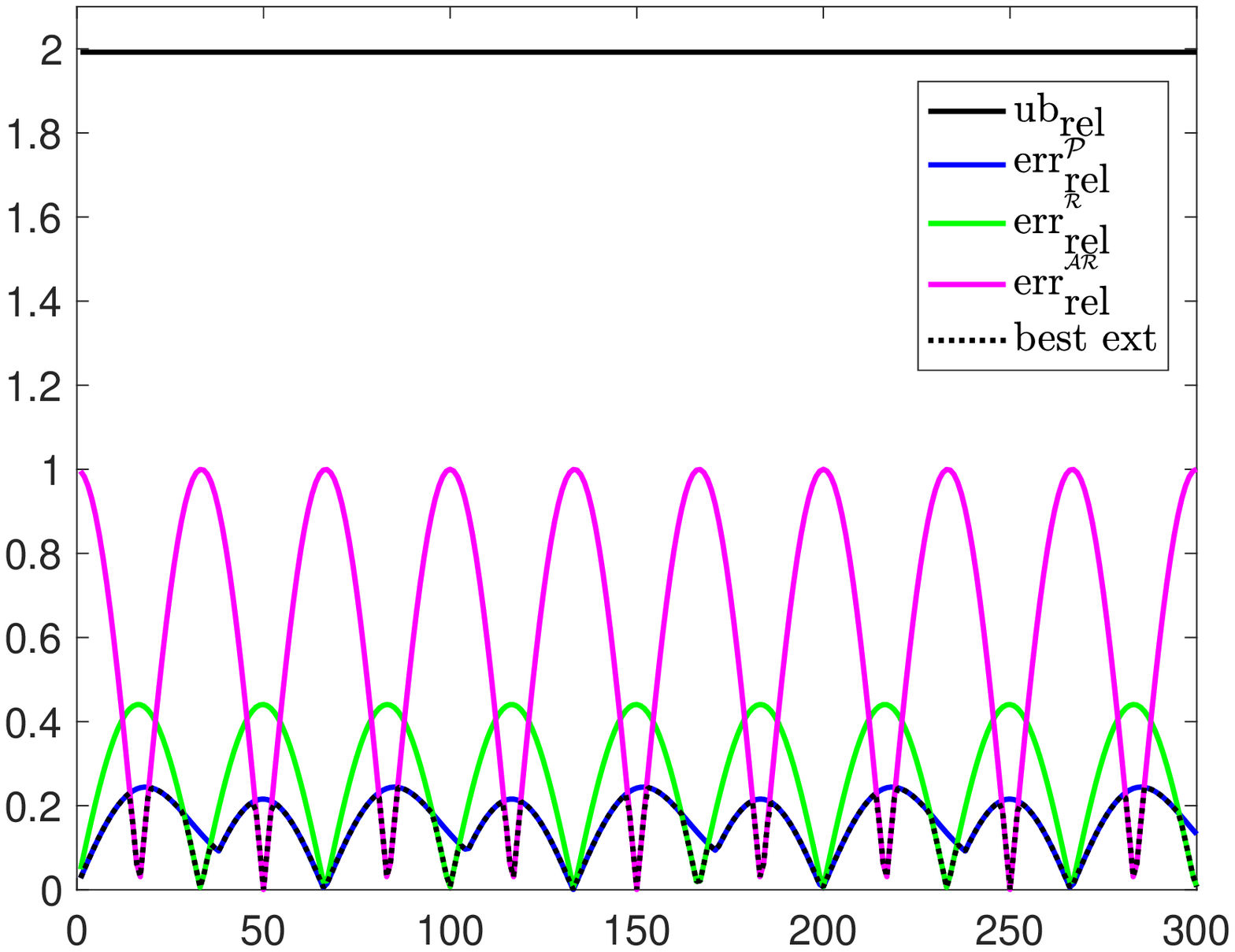}
    \caption{(Example 4) Left panel: signal. Right panel: upper bound, relative errors for different BCs and best extension.}\label{fig:Test_ex_4}
\end{figure}
As expected such curves have the same periodicity as the given signal. Furthermore the relative error upper bound a priori estimate proves to be valid also in this example. Finally this study allows to identify which kind of extension is performing best from a relative error point of view for each value of the phase of the signal at the boundary, dashed curve the right panel of Figure \ref{fig:Test_ex_4}.

\FloatBarrier

\section{Conclusions}\label{sec:Conclusions}

We considered the decomposition of non-stationary signals,
which is a problem of great interest from the theoretical point of view
and has important applications in many different fields.
For instance, it occurs in the identification of hidden periodicities and trends in time series relative to natural phenomena
(like average troposphere temperature)
and economic dynamics
(like financial indices).
Since standard techniques like Fourier or Wavelet Transform are unable to properly capture non-stationary phenomena,
in the last years several ad hoc methods have been proposed in the literature.
Such techniques provide iterative procedures for decomposing a signal into a finite number of simple components.

In this paper we focused on investigating IF algorithm employing different BCs
(Periodic, Reflective and Anti-Reflective BCs),
which give rise to different matrix structures.
We analyzed spectral properties of these matrices and convergence properties of IF method.
We also presented an extended version of IF, in which any BCs can be employed;
this allows to estimate the error propagation from the boundary towards the internal part of the signal.
Numerical experiments show that a suitable choice of BCs is able to improve in a meaningful way the quality
of signal decomposition in IMFs computed by IF method.

We think that this paper can open the way for further interesting developments,
for instance the study of adaptive choice of BCs, basing on the signal at hand,
and the proposal of new accurate BCs,
as done in \cite{pietro2017} for image restoration problem.

\section*{Acknowledgments}
This work was supported by the Istituto Nazionale di Alta Matematica (INdAM) ``INdAM Fellowships in Mathematics and/or Applications cofunded by Marie Curie Actions'', FP7--PEOPLE--2012--COFUND, Grant agreement n. PCOFUND--GA--2012--600198.

\end{document}